\newtheorem{theorem}{Theorem}[section]
\newtheorem{lemma}[theorem]{Lemma}
\newtheorem{corollary}[theorem]{Corollary}
\newtheorem{proposition}[theorem]{Proposition}
\theoremstyle{definition}
\newtheorem{definition}[theorem]{Definition}
\theoremstyle{remark}
\newtheorem{remark}[theorem]{Remark}
\numberwithin{equation}{section}
\newcommand{\N}{\mathbb{N}}
\newcommand{\R}{\mathbb{R}}
\newcommand{\Rn}{\R^n}
\newcommand{\tr}{\operatorname{tr}}
\newcommand{\Laplace}{\Delta}
\newcommand{\weakto}{\rightharpoonup}
\newcommand{\weakstarto}{\overset{*}{\weakto}}
\begin{document}
	\title{Pinning of interfaces by localized dry friction}

\author{Luca Courte}
	\address[Luca Courte]{Abteilung f\"ur Angewandte Mathematik,
		Albert-Ludwigs-Universit\"at Freiburg, Raum 228, Hermann-Herder-Straße 10, 79104 Freiburg i. Br.}
	\email{luca.courte@mathematik.uni-freiburg.de}
	\urladdr{https://aam.uni-freiburg.de/mitarb/courte/index.html}

	\author{Patrick Dondl}
	\address[Patrick Dondl]{Abteilung f\"ur Angewandte Mathematik,
		Albert-Ludwigs-Universit\"at Freiburg, Raum 217, Hermann-Herder-Straße 10, 79104 Freiburg i. Br.}
	\email{patrick.dondl@mathematik.uni-freiburg.de}
	\urladdr{https://aam.uni-freiburg.de/agdo/index.html}
	
	\author{Ulisse Stefanelli}
	\address[Ulisse Stefanelli]{Faculty of Mathematics, University of Vienna, 
		Oskar-Morgenstern-Platz 1, 1090 Wien, Austria  and  Istituto di Matematica
		Applicata e Tecnologie Informatiche \textit{{E. Magenes}}, v. Ferrata 1, 27100
		Pavia, Italy.}
	\email{ulisse.stefanelli@univie.ac.at}
	\urladdr{http://www.mat.univie.ac.at/$\sim$stefanelli}

	\subjclass[2010]{35Q82, 35B51, 35D30, 35D40, 35R60, 35R70}
	
	\keywords{rate-independent dissipation, dry friction, viscosity solutions, comparison principle, random media, hysteresis, pinning of interfaces, equivalence of weak solutions and viscosity solutions}
	
	\date{\today}
	
	\begin{abstract}
		We consider a model for the evolution of an interface in a heterogeneous environment governed by a parabolic equation. The heterogeneity is  introduced as obstacles exerting a localized dry friction. Our main result establishes the emergence of a rate-independent hysteresis for suitable randomly distributed obstacles, i.e., interfaces are pinned by the obstacles until a certain critical applied driving force is exceeded. The treatment of such a model in the context of pinning and depinning requires a comparison principle. We prove this property and hence the existence of viscosity solutions. Moreover, under reasonable assumptions, we show that viscosity solutions are equivalent to weak solutions.
	\end{abstract}
	
	\maketitle
	
	\section{Introduction}\label{sec:intro}
The question of whether interfaces, like phase- or domain boundaries, become stuck  (are {\it pinned}) or propagate freely in environments where obstacles, e.g., stemming from impurities in the medium, are present, is relevant for understanding the behavior of a large number of physical systems.  
Examples in this direction can be found in ferroelectricity, dislocation dynamics, solid-liquid phase change, and martensitic transformations, just to mention a few. 

In this article, we consider a model for interface evolution in a heterogeneous medium of the form
	\begin{equation} \label{eq:main}
		u_t  - \Laplace u + \varphi(x, u(x)) \partial R(u_t) \ni f(x, t).
	\end{equation}
 In this model, the graph of the function $u\colon \R^n\times[0,\infty) \to \R$, $n\ge 1$ represents an interface in the domain, e.g., a phase boundary. This phase boundary propagates in $\R^{n+1}$ according to a local driving force, which is comprised of a regularizing term (in this case the Laplacian as the linearization of a line tension) and a given driving force $f$ (which may come from, e.g., an external applied electric field in the case of a ferroelectric phase boundary). Additionally, the effect of the heterogeneities in the medium is modeled by the {\it dry-friction} term $-\varphi(x, u(x)) \partial R(u_t) $, with a space- and state-dependent weight $\varphi\ge 0$. The regions where $\varphi>0$ correspond to obstacles in the medium and activate a friction force. This is specified via the corresponding (pseudo)-potential of dissipation $R(\cdot) = |\cdot|$ and we indicate by $\partial$ its set-valued subdifferential, namely $\partial R(x) = x/|x|$ for $x \not =0$ and $\partial R(0)=[-1,1]$.

In \cite{Dondl_11b} an earlier, semilinear version of this model was discussed. In particular,    the equation 
\begin{equation}\label{semi}
u_t  - \Laplace u + \varphi(x,u(x)) =f 
\end{equation}
is addressed, see \cite{Dondl_11b} for further references regarding modeling. For this last class of models, the case of periodic heterogeneities was considered in \cite{Dirr:2006ui,Dirr:2008ki,Dondl_16b}. For the random case, see for example \cite{Dondl_11b,Coville:2009uv,Dondl_12a,Dondl_16e,Dondl:2015kk, Bodineau:2013ur, 1903.04952} where various aspects of the problem, including questions of pinning, depinning, and ballistic propagation of the interfaces, were considered and more references to the physics and engineering literature can be found.
In the case where $\varphi$ represents localized, according to a Poisson point process randomly distributed obstacles, it was shown \cite{Dondl_11b}---barring some degeneracies and dependencies--- that a stationary supersolution to equation \eqref{semi} exists. The form of $\varphi$ in this case is 
\[
\varphi(x) = \sum_{k\in\N} f_k(\omega)\varphi_0(x-x_k(\omega),y-y_k(\omega)),
\]
where $f_k(\omega) \ge 0$ are random obstacle strengths and $(x_k,y_k)$ is an $n+1$-dimensional Poisson point process. The obstacle shape is then given by the function $\varphi_0 \ge 0$, $\varphi_0 \in C_c^\infty(\R^{n+1})$. The setting of \eqref{semi} is however questionable from the modeling viewpoint, for the effect of obstacles is directional: they exert a driving force on the interface by pushing it downward. This is consistent with the case of an increasing $u$ only. It leads to the necessity of introducing some unnatural assumptions, e.g., in order to prove that a stationary solution exists one has to exclude the presence of obstacles crossing the line $u=0$. Otherwise, the force given by the function $\varphi$ may push the interface towards $-\infty$. Of course, this is not a physically reasonable situation. 

In order to amend such physical inconsistency, we focus here on 
the localized dry-friction dynamic of \eqref{eq:main}. This provides a sounder description of the interaction of obstacles and interfaces, which is in particular independent on the direction at which defects are traversed. In fact,  the friction force $-\varphi(x, u(x)) \partial R(u_t)$ is aligned with $-u_t$, so that the corresponding  contribution $\varphi(x, u(x)) \partial R(u_t) u_t = \varphi(x, u(x))R(u_t) \geq0$ to energy dissipation is always non negative, regardless of the sign of $u_t$ \cite{Moreau70}.

Dry-friction effects are ubiquitous in mechanics and have attracted attention since the pioneering observations by Coulomb. The reader is referred to the monographs \cite{Duvaut-Lions, Fremond02, KR, Visintin94} for a mathematically-oriented collection of classical materials on dry-friction modeling and analysis. In the context of dislocations dynamics, a localized dry-friction assumption similar to \eqref{eq:main} is advanced by Koslowski, Cuiti\~no, and Ortiz \cite{Koslowski:2002ur}. 

In the purely rate-independent case, i.e., when $u_t$ is omitted in \eqref{eq:main} and $\varphi$ is taken to be constant, existence of strong solutions to \eqref{eq:main} is readily checked by time discretization. The reader is referred to the monograph by Mielke and Roub\' \i\v cek~\cite{MR} for a thorough presentation of existence and approximation theories for rate-independent systems. In absence of the viscous term $u_t$ and for uniformly positive weights $\varphi$, equation \eqref{eq:main} would fall into the class of rate-independent systems with state-dependent dissipation, which admit a well-posedness theory \cite{BKS, Kunze, MR07}.

The case of \eqref{eq:main} is more involved, for mixed rate-independent and rate-dependent dynamics occur. 
Existence for such mixed systems is also available. The reader is referred to the series of contributions by Mielke, Rossi, and Savar\'e \cite{MRS1, MRS2, MRS3} on these topics. The available theory seems however not to cover the specific case of \eqref{eq:main}, where the switching between purely viscous and mixed dynamics is driven by the state itself.

In order to treat the model in \eqref{eq:main} in the context of pinning and depinning of interfaces, some technical hurdles have to be overcome. In particular, we abstain from following the by-now classical path of the variational theory of rate-independent processes \cite{MR}, or the theory of balanced viscosity solutions \cite{1712.06851, knees_rossi_zanini_2019, Efendiev_Mielke}, and focus instead on solvability in the viscosity sense \cite{Crandall1992}.  It should be noted that viscosity solutions and balanced viscosity solutions (BV-solutions) are two different concepts.  The theory of viscosity solutions is instrumental for establishing comparison tools which are crucial in order to give meaning to the eventually constructed super- and subsolutions. The existence of viscosity solutions and the comparison principle are proved in Section~\ref{sec:vis_sols} for some more general problem including \eqref{eq:main}. 

We then return to our specific choice \eqref{eq:main} in Section  \ref{sec:application}. In Section \ref{ssec:equivalence} we examine the relation of viscosity solutions to the already well known weak solutions for \eqref{eq:main} on the flat torus. The pinning result is then presented in Section \ref{ssec:pinning}. In the same setting of randomly distributed, localized obstacles as treated in \cite{Dondl_11b}, we show existence of stationary super- and subsolutions for $-F^* \le f \le F^*$ for some deterministic $F^*>0$. This fully establishes the emergence of rate independent hysteresis in such a system when considering (quasistatic) time-dependent loading with changing sign. In particular, we see that the localized dry friction may be used like an additional driving force with the appropriate sign (i.e., negative when constructing a supersolution, positive when constructing a subsolution). A precise statement of our main result is given in Theorem \ref{thm:main}.
	
	\section{Viscosity Solutions for Problems with Rate-Independent Dissipation}\label{sec:vis_sols}
	In order to treat the differential inclusion \eqref{eq:main} (referred to as {\it equation} in the following) we are going to resort to the notion of viscosity solutions (see for instance \cite{Crandall1992}). In fact we will focus our attention on the following more general  equation 
	\begin{equation} \label{eq:pde}
		u_t + F(x, t, u, \nabla u, D^2 u) \in \mathcal{S}(u_t) G(x, t, u, \nabla u) \text{ in } \Omega \times I,
	\end{equation}
	with $\Omega \subset \Rn$ open and connected, $I \coloneqq (0, T)$ for some final reference time $T>0$, $F: \Omega \times I \times \R \times \Rn \times \operatorname{Sym}(n) \to \R$, $G: \Omega \times I \times \R \times \Rn \to [0,\infty)$, $\mathcal{S} : \R \to \mathcal{P}(\R)$, where $\operatorname{Sym}(n)$ is the set of all symmetric $n\times n$ matrices, and $\mathcal{P}(\R)$ is the power set of $\R$. The specific case of equation \eqref{eq:main} is recovered by choosing $$F(x, t, r, p, X) \coloneqq -\tr(D^2 u) - f(x, t), \quad G(x, t, r, p) \coloneqq \varphi(x, r), \quad \mathcal{S}(a) \coloneqq -\partial R(a).$$ In particular, recall that one has $\mathcal{S}(0) = [-1, 1]$ and $\mathcal{S}(a)=-a/|a|$ if $a \not =0$.

We denote the {\it parabolic superjet (subjet)} of the function $u : \Omega \times I \to \R$ at the point $(x, t) \in \Omega \times (0, T)$ by  $\mathcal{P}^{2, +} u(x, t)$, $\mathcal{P}^{2, -} u(x, t)$, respectively, see \cite[Section~8]{Crandall1992} for a rigorous definition, we introduce the following definition.

	\begin{definition}[Viscosity Solutions]\label{def:vis_sol}
\it An upper-semicontinuous function \linebreak $u\in \mathrm{USC}(\Omega\times I)$ is a \emph{viscosity subsolution} of \eqref{eq:pde} if for all $(x, t) \in \Omega \times I$ and for all $(a, p, X) \in \mathcal{P}^{2, +} u(x, t)$ there exists $\mu \in \mathcal{S}(a)$ such that
		\[
			a + F(x, t, u(x, t), p, X) \le \mu G(x, t, u(x, t), p).
		\]
		A lower-semicontinuous function $v\in \mathrm{LSC}(\Omega\times I)$ is a \emph{viscosity supersolution} of \eqref{eq:pde} if for all $(y, s) \in \Omega \times I$ and for all $(b, q, Y) \in \mathcal{P}^{2, -} v(y, s)$ there exists $\nu \in \mathcal{S}(b)$ such that
		\[
			b + F(y, s, v(y, s), q, Y) \ge \nu G(y, s, v(y, s), q).
		\]
		A \emph{viscosity solution} of \eqref{eq:pde} is a function that is both viscosity sub- and supersolution.
	\end{definition}
	This notion generalizes the definition of viscosity solutions from \cite{Crandall1992} to the class of differential inclusions we are interested in. In order to prove that this generalization is indeed appropriate let us now check that, under rather general assumptions, uniform limits of viscosity solutions in the sense of \cite{Crandall1992} are viscosity solutions in the sense of Definition \ref{def:vis_sol}.
	
	\begin{proposition}\label{thm:stability}
		Let $u_\epsilon \in \mathrm{USC}(\Omega \times I)$ be a sequence of viscosity subsolutions (in the sense of \cite{Crandall1992}) of the regularized problem
		\[
		u_t + F_\epsilon(x, t, u, \nabla u, D^2 u) = S_\epsilon(u_t) G_\epsilon(x, t, u, \nabla u),
		\]
		with $F_\epsilon : \Omega \times I \times \R \times \Rn \times \operatorname{Sym}(n) \to \R$ continuous, $G_\epsilon : \Omega \times \R \times \R \times \Rn \to \R$ continuous,  and $S_\epsilon : \R \to \R$.
		
Assume that $u_\epsilon \to u$, $F_\epsilon \to F$, and $G_\epsilon \to G$ locally uniformly and that whenever $a_\epsilon \to a$ there exists a not relabeled convergent subsequence $S_\epsilon(a_\epsilon) \to \mu \in \mathcal{S}(a)$.  Then, $u$ is a viscosity subsolution of \eqref{eq:pde} in the sense of Definition \emph{\ref{def:vis_sol}}.
	\end{proposition}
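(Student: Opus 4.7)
The plan is to follow the standard stability argument for viscosity solutions (as in the Crandall--Ishii--Lions user's guide), augmented by the set-valued selection hypothesis on $S_\epsilon$ which accounts for the differential-inclusion structure of the limit.

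First, I fix an arbitrary point $(x_0, t_0) \in \Omega \times I$ and an arbitrary element $(a, p, X) \in \mathcal{P}^{2,+} u(x_0, t_0)$ of the limit's parabolic superjet, and I must produce $\mu \in \mathcal{S}(a)$ such that $a + F(x_0, t_0, u(x_0, t_0), p, X) \le \mu\, G(x_0, t_0, u(x_0, t_0), p)$. Since $u$ is USC and $u_\epsilon \to u$ locally uniformly, I invoke the classical jet-approximation lemma (see e.g.\ \cite[Prop.~4.3]{Crandall1992}) to produce sequences $(x_\epsilon, t_\epsilon) \to (x_0, t_0)$ and $(a_\epsilon, p_\epsilon, X_\epsilon) \in \mathcal{P}^{2,+} u_\epsilon(x_\epsilon, t_\epsilon)$ with $u_\epsilon(x_\epsilon, t_\epsilon) \to u(x_0, t_0)$ and $(a_\epsilon, p_\epsilon, X_\epsilon) \to (a, p, X)$. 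This is the only genuinely technical ingredient, but it is standard.

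Next, since $u_\epsilon$ is a viscosity subsolution (in the classical sense of \cite{Crandall1992}) of the regularized equation, I may test it at $(x_\epsilon, t_\epsilon)$ with the jet element $(a_\epsilon, p_\epsilon, X_\epsilon)$, obtaining
\[
    a_\epsilon + F_\epsilon(x_\epsilon, t_\epsilon, u_\epsilon(x_\epsilon, t_\epsilon), p_\epsilon, X_\epsilon) \le S_\epsilon(a_\epsilon)\, G_\epsilon(x_\epsilon, t_\epsilon, u_\epsilon(x_\epsilon, t_\epsilon), p_\epsilon).
\]
Now I pass to the limit. On the left, the local uniform convergence $F_\epsilon \to F$ together with the continuity of $F$ gives $F_\epsilon(x_\epsilon, t_\epsilon, u_\epsilon(x_\epsilon, t_\epsilon), p_\epsilon, X_\epsilon) \to F(x_0, t_0, u(x_0, t_0), p, X)$, and similarly $G_\epsilon(x_\epsilon, t_\epsilon, u_\epsilon(x_\epsilon, t_\epsilon), p_\epsilon) \to G(x_0, t_0, u(x_0, t_0), p)$. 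By the standing hypothesis on $S_\epsilon$, applied to the convergent sequence $a_\epsilon \to a$, I extract a (not relabeled) subsequence along which $S_\epsilon(a_\epsilon) \to \mu \in \mathcal{S}(a)$. Passing to the limit in the inequality along this subsequence yields exactly
\[
    a + F(x_0, t_0, u(x_0, t_0), p, X) \le \mu\, G(x_0, t_0, u(x_0, t_0), p),
\]
with $\mu \in \mathcal{S}(a)$, which is the required condition in Definition \ref{def:vis_sol}.

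The main obstacle I anticipate is not the limit passage itself, which is routine, but rather making sure that the inclusion-type right-hand side is handled cleanly: one must not fix $\mu$ a priori (it depends on the jet element and the chosen subsequence), and one must check that the selection is compatible with $\mathcal{S}(a)$. The hypothesis on $S_\epsilon$ is tailored precisely to supply this, so the argument reduces to invoking it at the right moment. Since $(x_0, t_0)$ and $(a, p, X)$ were arbitrary, this shows $u \in \mathrm{USC}(\Omega \times I)$ (as the locally uniform limit of USC functions) is a viscosity subsolution in the sense of Definition \ref{def:vis_sol}.
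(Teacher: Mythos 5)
Your proof is correct and follows essentially the same route as the paper's: invoke \cite[Prop.~4.3]{Crandall1992} to approximate the limit jet, test the $\epsilon$-equation with the approximating jets, extract a subsequence along which $S_\epsilon(a_\epsilon)\to\mu\in\mathcal{S}(a)$, and pass to the limit using the local uniform convergence of $F_\epsilon$ and $G_\epsilon$. No substantive difference.
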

	\begin{proof}
	    The local uniform convergence of $u_\epsilon \to u$ standardly implies that, for every $(x, t) \in \Omega \times I$ and every $(a, p, X) \in \mathcal{P}^{2, +} u(x, t)$, we may find a sequence $(x_\epsilon, t_\epsilon) \in \Omega \times I$ and $(a_\epsilon, p_\epsilon, X_\epsilon) \in \mathcal{P}^{2, +} u_\epsilon(x_\epsilon, t_\epsilon)$ such that
		\[
			(x_\epsilon, t_\epsilon, u_\epsilon(x_\epsilon, t_\epsilon), a_\epsilon, p_\epsilon, X_\epsilon) \to (x, t, u(x, t), a, p, X)
		\]
		as $\epsilon \to 0$ \cite[Proposition~4.3]{Crandall1992}. By passing to a subsequence, our assumptions provide us with a $\mu \in \mathcal{S}(a)$ such that
		\[
			S_\epsilon(a_\epsilon) \to \mu.
		\]
		As $u_\epsilon$ are viscosity subsolutions in the sense of \cite{Crandall1992}, it holds
		\[
			a_\epsilon + F_\epsilon(x_\epsilon, t_\epsilon, u_\epsilon(x_\epsilon, t_\epsilon), p_\epsilon, X_\epsilon) \le S_\epsilon(a_\epsilon) G_\epsilon(x_\epsilon, t_\epsilon, u_\epsilon(x_\epsilon, t_\epsilon), p_\epsilon).
		\]
		Using the local uniform convergence of $F_\epsilon, G_\epsilon$, we can pass to the limit and obtain
		\[
			F(x, t, u(x, t), a, p, X) \le \mu G(x, t, u(x, t), p),
		\]
		and $u$ is a viscosity subsolution \eqref{eq:pde} in the sense of Definition {\ref{def:vis_sol}}.
	\end{proof}
		
	\subsection{Comparison Principles}
\label{ssec:comp_existence}
	In this section, we will prove first a comparison principle on bounded domains (Theorem \ref{thm:comp}) and then in $\Rn$ (Theorem \ref{thm:comp_rn}). The exposition of this section and the proofs follow the structure of \cite[Sections~3, 5.D., and~8]{Crandall1992}.

Let us start by listing assumptions on the nonlinearities $F$, $G$, and $\mathcal{S}$ in \eqref{eq:pde}. We ask the following:
 
 The function
  $F : \Omega \times I \times \R \times \Rn \times \mathrm{Sym}(n) \to
  \R$ is continuous and it holds
  \begin{enumerate}[label=F\arabic*)]
  \item\label{it:F_inc} $F(x, t, r, p, X) \le F(x, t, s, p, X)$ for
    all $r, s \in \R$ with $r \le s$ ,
  \item\label{it:F_modulus} There exists a modulus of continuity
    $\omega_F$ such that
    \[
      F(y, t, r, \alpha (x-y), Y) - F(x, t, r, \alpha (x-y), X) \le
      \omega_F(|x-y| + \alpha |x-y|^2)
    \]
    whenever $x, y \in \Omega, t \in I, r\in \R, \alpha \in \R_{\ge0}$
    and
    \[
      -3\alpha \left(\begin{array}{cc}\operatorname{Id} & 0 \\ 0 &
          \operatorname{Id}\end{array}\right) \le
      \left(\begin{array}{cc}X & 0 \\ 0 & -Y\end{array}\right)\le
      3\alpha \left(\begin{array}{cc}\operatorname{Id} &
          -\operatorname{Id} \\ -\operatorname{Id} &
          \operatorname{Id}\end{array}\right).\]
  \end{enumerate}
 
	The function $G :  \Omega \times I \times \R \times \Rn \to [0, \infty)$ is continuous and satisfies
	\begin{enumerate}[label=G\arabic*)]
		\item\label{it:G_Lipschitz} $\left|G(x, t, r, p)-G(x, t, s, p)\right| \le L_G |r-s|$ for some $L_G>0$ and all $r, s \in \R$,
		\item\label{it:G_modulus} There exists a modulus of continuity $\omega_G$ such that 
		\[
		G(y, t, r, \alpha (x-y)) - G(x, t, r, \alpha (x-y)) \le \omega_G(|x-y| + \alpha |x-y|^2),
		\]
		whenever $x, y \in \Omega, t \in I, r\in \R$.
	\end{enumerate}

The set-valued function $\mathcal{S} :\R \to \mathcal{P}(\R)\setminus \emptyset $ is such that 
	\begin{enumerate}[label=S\arabic*)]
		\item\label{it:S_mon} $-\mathcal{S}$ is {\it maximal monotone} \cite{Brezis73}, namely $\sup \mathcal S(a) \le \inf \mathcal S(b)$ for all $a, b \in \R$ with $a > b$, and the graph of $-\mathcal S$ cannot be properly extended (in the sense of graph inclusion) by a monotone graph,
		\item\label{it:S_bdd} the range $\mathcal{S}(\R) = \cup_{a\in \R} \mathcal S(a)$ is bounded, i.e., $ \mathcal{S}(\R) \subset [-\mathcal{S}_\mathrm{max},\mathcal{S}_\mathrm{max}]$ for some $\mathcal{S}_\mathrm{max}>0$.
	\end{enumerate}
 
Note that all assumptions are fulfilled in the specific case of \eqref{eq:main}.
	\begin{theorem}[Comparison Principle I]\label{thm:comp}
		Let $\Omega \subset\Rn$ be bounded. Assume \emph{\ref{it:F_inc}}, \emph{\ref{it:F_modulus}},  \emph{\ref{it:G_Lipschitz}}, \emph{\ref{it:G_modulus}}, and \emph{\ref{it:S_mon}}, \emph{\ref{it:S_bdd}}, and let $u$ be a viscosity subsolution and $v$ be a viscosity supersolution of \eqref{eq:pde} with $u \le v$ on the parabolic boundary $\partial_P (\Omega \times I)$, i.e., on $ \partial\Omega \times I \cup \Omega \times \{0\}$. Then, $u \le v$ in $\Omega \times I$.
	\end{theorem}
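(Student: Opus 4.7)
\emph{Strategy.} I would argue by contradiction using the Crandall--Ishii--Lions doubling-of-variables technique of \cite[Sections~3 and~8]{Crandall1992}, with the set-valued right-hand side $\mathcal{S}(u_t) G$ handled via the monotonicity \ref{it:S_mon}. Assuming $\sup_{\Omega \times I}(u - v) > 0$, the key object is the auxiliary function
\[
\Psi_\alpha(x, y, t, s) \coloneqq u(x, t) - v(y, s) - \frac{\alpha}{2}|x - y|^2 - \frac{\alpha}{2}(t - s)^2 - \frac{\eta}{T - t},
\]
with $\alpha > 0$ large and $\eta > 0$ small. The time penalty $\eta/(T - t)$ serves two purposes: it keeps the maximum strictly inside $[0, T)$ and, more decisively, produces a strict positive gap between the time components of the super- and subjets of $u$ and $v$, which is precisely the mechanism needed to activate \ref{it:S_mon}.

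\emph{Jets and combination.} By semicontinuity, boundedness of $\Omega$, and the boundary hypothesis $u \le v$ on $\partial_P(\Omega \times I)$, $\Psi_\alpha$ attains its maximum at an interior point $(\hat x_\alpha, \hat y_\alpha, \hat t_\alpha, \hat s_\alpha)$ for $\alpha$ large, with the standard estimate $\alpha(|\hat x_\alpha - \hat y_\alpha|^2 + (\hat t_\alpha - \hat s_\alpha)^2) \to 0$ and $u(\hat x_\alpha, \hat t_\alpha) \ge v(\hat y_\alpha, \hat s_\alpha)$. The parabolic maximum principle for semicontinuous functions \cite[Theorem~8.3]{Crandall1992} then supplies jets $(a_\alpha, p_\alpha, X_\alpha) \in \overline{\mathcal{P}}^{2,+} u(\hat x_\alpha, \hat t_\alpha)$ and $(b_\alpha, p_\alpha, Y_\alpha) \in \overline{\mathcal{P}}^{2,-} v(\hat y_\alpha, \hat s_\alpha)$ with common $p_\alpha = \alpha(\hat x_\alpha - \hat y_\alpha)$, matrices satisfying the inequality in \ref{it:F_modulus}, and
\[
a_\alpha - b_\alpha = \frac{\eta}{(T - \hat t_\alpha)^2} > 0.
\]
Definition \ref{def:vis_sol} provides multipliers $\mu_\alpha \in \mathcal{S}(a_\alpha)$ and $\nu_\alpha \in \mathcal{S}(b_\alpha)$. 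The strict inequality and \ref{it:S_mon} force $\mu_\alpha \le \nu_\alpha$, while \ref{it:S_bdd} yields $|\mu_\alpha|, |\nu_\alpha| \le \mathcal{S}_\mathrm{max}$. Subtracting the two viscosity inequalities and decomposing
\[
\mu_\alpha G(\hat x_\alpha, \hat t_\alpha, u, p_\alpha) - \nu_\alpha G(\hat y_\alpha, \hat s_\alpha, v, p_\alpha) = \mu_\alpha \bigl[G(\hat x_\alpha, \hat t_\alpha, u, p_\alpha) - G(\hat y_\alpha, \hat s_\alpha, v, p_\alpha)\bigr] + (\mu_\alpha - \nu_\alpha)\, G(\hat y_\alpha, \hat s_\alpha, v, p_\alpha)
\]
reveals the key structural cancellation: the last summand is non-positive because $\mu_\alpha - \nu_\alpha \le 0$ and $G \ge 0$.

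\emph{Closing the estimate.} Using \ref{it:F_inc} (noting $u(\hat x_\alpha, \hat t_\alpha) \ge v(\hat y_\alpha, \hat s_\alpha)$ at the positive maximum), \ref{it:F_modulus}, continuity of $F$ and $G$ in $t$, and \ref{it:G_modulus}, the remaining differences are bounded by moduli vanishing as $\alpha \to \infty$, so the surviving inequality has the schematic form
\[
\frac{\eta}{(T - \hat t_\alpha)^2} \le o(1) + \mathcal{S}_\mathrm{max} L_G \bigl|u(\hat x_\alpha, \hat t_\alpha) - v(\hat y_\alpha, \hat s_\alpha)\bigr|.
\]
The main obstacle is the last term, produced by the Lipschitz-in-$r$ estimate \ref{it:G_Lipschitz}: as $\alpha \to \infty$ it does not vanish but rather converges to a positive multiple of $\sup(u - v)$. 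To absorb it I would first prove the comparison on a short time interval $[0, T_0]$, with $T_0$ chosen sufficiently small relative to $L_G$ and $\mathcal{S}_\mathrm{max}$ and $\eta$ proportional to $\sup(u - v)$, so that the left-hand side $\eta/T_0^2$ strictly dominates the Lipschitz contribution and the contradiction closes. Having obtained $u(\cdot, T_0) \le v(\cdot, T_0)$, the argument is iterated a finite number of times over $[T_0, 2T_0], [2T_0, 3T_0], \ldots$ to cover $[0, T]$. Apart from this short-time plus iteration step, the proof is a direct adaptation of the Crandall--Ishii--Lions scheme, the only genuine novelty being the use of \ref{it:S_mon} to dispose of the set-valued multiplier.
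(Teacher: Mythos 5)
Your skeleton agrees with the paper's proof up to one decisive point: the doubling of variables, the use of \ref{it:S_mon} together with $a-b>0$ to get $\mu\le\nu$, and the sign of $G$ to discard the term $(\mu-\nu)G\le 0$ are exactly the paper's mechanism. The genuine gap is in how you dispose of the non-vanishing contribution coming from \ref{it:G_Lipschitz}. Your recipe --- take $\eta$ proportional to $\sup(u-v)$ and $T_0$ so small that $\eta/T_0^2$ dominates the Lipschitz term --- does not close as stated, for two reasons. First, at the maximum point the Lipschitz term involves $u(\hat x,\hat t)-v(\hat y,\hat s)$, which in the limit equals the \emph{penalized} supremum plus $\eta/(T_0-\hat t)$; so the right-hand side itself carries a term $\mathcal{S}_\mathrm{max}L_G\,\eta/(T_0-\hat t)$ growing with $\eta$, and any domination argument already needs $\mathcal{S}_\mathrm{max}L_GT_0<1$ before constants are compared. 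Second, and more seriously, $\eta$ cannot be taken proportional to $\sup(u-v)$ with a universal constant: to keep the maximum off the parabolic boundary you need the penalized supremum $\sup\{u-v-\eta/(T_0-t)\}$ to remain positive, which forces $\eta\lesssim(T_0-t^*)\sup(u-v)$, where $t^*$ is a time at which $u-v$ is nearly maximal --- a quantity you do not control and which may be arbitrarily close to $T_0$. Thus the two requirements on $\eta$ (large enough to dominate, small enough to retain an interior positive maximum) may conflict as you have arranged them. A repair is possible (choose $\eta$ just below the value where the penalized supremum crosses zero, with $\mathcal{S}_\mathrm{max}L_GT_0\le 1/2$), but it is delicate and is not what you wrote; in addition, the iteration has its own soft spot: for merely semicontinuous $u,v$, knowing $u\le v$ on $\Omega\times(0,kT_0)$ does not by itself yield $u(\cdot,kT_0)\le v(\cdot,kT_0)$ (a USC subsolution may jump up at the junction time), so one must restart on overlapping intervals or argue separately at the junction.

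The paper removes the difficulty at the root instead: before doubling, it performs the exponential change of unknown $U=e^{-\lambda t}u$, $V=e^{-\lambda t}v$, which adds $\lambda U$ (resp.\ $\lambda V$) to the equation while preserving \ref{it:F_inc}, \ref{it:F_modulus}, \ref{it:G_Lipschitz}, \ref{it:G_modulus}; choosing $\lambda\ge\mathcal{S}_\mathrm{max}L_G$ turns the offending term into $(\mathcal{S}_\mathrm{max}L_G-\lambda)(U-V)\le 0$ at the maximum, so the time penalty $\gamma/(T-t)$ only needs to be positive and the contradiction comes purely from the vanishing moduli $\omega_F,\omega_G$ as $\alpha\to\infty$, with no short-time restriction or iteration. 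Two smaller remarks: doubling the time variable takes you outside \cite[Theorem~8.3]{Crandall1992}, which is formulated for a common time, and would additionally require moduli of continuity of $F$ and $G$ in $t$ uniform in the gradient variable, which \ref{it:F_modulus} and \ref{it:G_modulus} do not provide --- keep a single time variable as in the paper; and invoking jets in the closures $\overline{\mathcal{P}}^{2,\pm}$ with the set-valued right-hand side implicitly uses the closedness of the graph of $\mathcal{S}$ (property C1) in the paper), which deserves a word.
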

	\begin{proof} Comparison principles usually hinge on the monotonicity of the nonlinear terms in the equation.  
As $G$ is not necessarily monotone in $u$, we start by performing an exponential rescaling, i.e., we let $\lambda > 0$ and define $U(x, t) \coloneqq e^{-\lambda t} u(x, t)$ and $V(x, t) \coloneqq e^{-\lambda t} v(x, t)$. It is easy to see that $U$ and $V$ are sub- and supersolution to
		\[
		U_t + \lambda U + \tilde{F}(x, t, U, \nabla U, D^2 U) \in \mathcal{S}(e^{\lambda t}(U_t + \lambda U)) \tilde{G}(x, t, U, \nabla U)
		\]
		with $$\tilde{F}(x, t, r, p, X) \coloneqq e^{-\lambda t} F(x, t, e^{\lambda t}r, e^{\lambda t}p, e^{\lambda t}X)$$ and $$\tilde{G}(x, t, r, p) \coloneqq e^{-\lambda t} G(x, t, e^{\lambda t}r, e^{\lambda t}p).$$ The functions $\tilde{F}$ and $\tilde{G}$ satisfy \ref{it:F_inc}, \ref{it:F_modulus}, and \ref{it:G_Lipschitz}, \ref{it:G_modulus}, respectively, with the same constants by rescaling the moduli by $e^{-\lambda t}$.
		
		Let us check the comparison for $U$ and $V$. Assume by contradiction that comparison does not hold, i.e.,
		\[
		\sup_{\substack{x\in \Omega \\ t\in I}} \left\{ U(x, t) - V(x, t)\right\} \eqqcolon \delta > 0
		\]
		and define
		\[
		M_{\alpha, \gamma} \coloneqq \sup_{\substack{x, y \in \Omega \\ t\in I}} \left\{ U(x, t) - V(y, t) - \tfrac{\alpha}{2}|x-y|^2 - \tfrac{\gamma}{T-t}\right\}.
		\]
		We have $M_{\alpha, \gamma} > \delta/2$ for $\gamma$ small enough.
		Since the domain is bounded, the supremum is achieved at a point $(\hat{x}, \hat{y}, \hat{t}) \in \overline{\Omega}\times \overline{\Omega} \times [0,T)$.
		
		We will now show that the triplet $(\hat{x}, \hat{y}, \hat{t})$ is in the interior of the parabolic domain if $\alpha$ is large enough. Assume first that $\hat t = 0$ then
		\[
		M_{\alpha, \gamma } = U(\hat{x}, 0) - V(\hat{y}, 0) - \tfrac{\alpha}{2}|\hat x-\hat y|^2 - \tfrac{\gamma}{T-\hat t} \le  - \tfrac{\alpha}{2}|\hat x-\hat y|^2 - \tfrac{\gamma}{T-\hat t} \le 0,
		\]
		since $u\le v$ on the parabolic boundary $\partial_P (\Omega \times I)$, contradicting $M_{\alpha, \gamma} > \delta/2$. We now check that, if $\alpha$ is chosen to be large enough, $\hat{x}$ and $\hat{y}$ necessarily belong to $\Omega$. Assume the contrary, namely there exists a subsequence $\alpha_n \to \infty$ with $\hat{x}_n \in \partial \Omega$ realizing the sup. Then, we also have $\hat{y}_n \to \hat{y}_\infty \in \partial \Omega$ and therefore
		\[
		\lim_{n\to \infty}M_{\alpha_n, \gamma } = \lim_{n\to \infty}\left(U(\hat{x}_n, \hat{t}) - V(\hat{y}_n, \hat{t}) - \tfrac{\alpha_n}{2}|\hat x_n-\hat y_n|^2 - \tfrac{\gamma}{T-\hat t}\right) \le 0 - \tfrac{\gamma}{T} \le 0,
		\]
		where we used again that $u \le v$ on $\partial_P(\Omega\times I)$ and reached a contradiction. Therefore, we have proved that $(\hat{x}, \hat{y}, \hat{t}) \in \Omega \times \Omega \times (0, T)$, at least if $\alpha$ is large enough.  Hence, we have \cite[Theorem 8.3]{Crandall1992}
		\[
		(a, \alpha(\hat{x}-\hat{y}), X) \in \mathcal{P}^{2, +} U(\hat{x}, \hat{t}) \text{ and }
		(b, \alpha(\hat{x}-\hat{y}), Y) \in \mathcal{P}^{2, -} V(\hat{y}, \hat{t}) 
		\]
		with $a-b = \tfrac{\gamma}{(T-\hat{t})^2}$ and $$-3\alpha \left(\begin{array}{cc}\operatorname{Id} & 0 \\ 0 & \operatorname{Id}\end{array}\right) \le \left(\begin{array}{cc}X & 0 \\ 0 & -Y\end{array}\right)\le 3\alpha \left(\begin{array}{cc}\operatorname{Id} & -\operatorname{Id} \\ -\operatorname{Id} & \operatorname{Id}\end{array}\right),$$ which implies that $X \le Y$. As $U$ is a subsolution and $V$ is a supersolution, we can find $\mu \in \mathcal{S}(e^{\lambda t}(a+\lambda U))$ and $\nu \in \mathcal{S}(e^{\lambda t}(b+\lambda V))$ such that
		\begin{align}
		a + \lambda U + F(\hat x,\hat t, U, \alpha(\hat{x}-\hat{y}), X) - \mu G(\hat x,\hat  t, U, \alpha(\hat{x}-\hat{y})) \le 0,\label{eq:sub} \\
		b + \lambda V + F(\hat y,\hat t, V, \alpha(\hat{x}-\hat{y}), Y) - \nu G(\hat y,\hat  t, V, \alpha(\hat{x}-\hat{y})) \ge 0.\label{eq:super}
		\end{align}
		By subtracting \eqref{eq:sub} from \eqref{eq:super}, we obtain
		\begin{align*}
		a - b \le&~ \lambda (V-U)  + F(\hat y,\hat t, V, \alpha(\hat{x}-\hat{y}), Y) - F(\hat x,\hat t, U, \alpha(\hat{x}-\hat{y}), X) \\
		&- \nu G(\hat y,\hat  t, V, \alpha(\hat{x}-\hat{y})) + \mu G(\hat x,\hat  t, U, \alpha(\hat{x}-\hat{y})).
		\end{align*}
		Adding and subtracting terms, we get
		\begin{align*}
		\tfrac{\gamma}{(T-\hat{t})^2} \le&~ \lambda (V-U) + F(\hat y,\hat t, V, \alpha(\hat{x}-\hat{y}), Y) - F(\hat x,\hat t, V, \alpha(\hat{x}-\hat{y}), X) \\
		&+ F(\hat x,\hat t, V, \alpha(\hat{x}-\hat{y}), X)- F(\hat x,\hat t, U, \alpha(\hat{x}-\hat{y}), X) \\
		&- \nu G(\hat y,\hat  t, V, \alpha(\hat{x}-\hat{y}))+ \nu G(\hat x,\hat  t, V, \alpha(\hat{x}-\hat{y})) \\
		&- \nu G(\hat x,\hat  t, V, \alpha(\hat{x}-\hat{y}))+ \nu G(\hat x,\hat  t, U, \alpha(\hat{x}-\hat{y})) 
		\\&- \nu G(\hat x,\hat  t, U, \alpha(\hat{x}-\hat{y})) + \mu G(\hat x,\hat  t, U, \alpha(\hat{x}-\hat{y})) \\
		\le&~ \lambda (V-U) + e^{-\lambda \hat t}\omega_F(|\hat x-\hat y| + \alpha |\hat x-\hat y|^2) + 0 \\
		&+|\nu|  e^{-\lambda\hat t}\omega_G(|\hat x-\hat y|+ \alpha |\hat x-\hat y|^2) 
		+ |\nu| L_G |U-V| \\
		&+ (\mu - \nu) G(\hat x,\hat  t, U, \alpha(\hat{x}-\hat{y})).
		\end{align*}
		Where the second inequality follows from \ref{it:F_inc}, \ref{it:F_modulus} and \ref{it:G_Lipschitz}, \ref{it:G_modulus} by noticing that, $U > V$, $X \le Y$, and $a > b$. In particular, we have $e^{\lambda t}(a+\lambda U) > e^{\lambda t}(b + \lambda V)$ and \ref{it:S_mon} implies that $\mu-\nu \le 0$. As $G \ge 0$, it follows that the last term above is negative. Eventually, by means of \ref{it:S_bdd} we get
		\begin{align*}
		\tfrac{\gamma}{(T-\hat{t})^2} \le&~\lambda (V-U) + e^{-\lambda \hat t}\omega_F(|\hat x-\hat y|+ \alpha |\hat x-\hat y|^2) \\
		&+|\nu|  e^{-\lambda\hat t}\omega_G(|\hat x-\hat y|+ \alpha |\hat x-\hat y|^2) + |\nu| L_G |U-V| \\
		\le&~ (\mathcal{S}_\mathrm{max} L_G -\lambda) (U-V) \\
		& + (1+\mathcal{S}_\mathrm{max})\big(\omega_F(|\hat x-\hat y|+ \alpha |\hat x-\hat y|^2) \\
		&+ \omega_G(|\hat x-\hat y|+ \alpha |\hat x-\hat y|^2)\big).
		\end{align*}
		By choosing $\lambda \ge \mathcal{S}_\mathrm{max} L_G$ the first term becomes negative and hence
		\begin{align*}
		0 < \tfrac{\gamma}{T^2} &\le \tfrac{\gamma}{(T-\hat{t})^2} \\
		&\le (1+\mathcal{S}_\mathrm{max}) \left(\omega_F(|\hat x-\hat y|+ \alpha |\hat x-\hat y|^2) + \omega_G(|\hat x-\hat y|+ \alpha |\hat x-\hat y|^2)\right).
		\end{align*}
		By taking $\alpha \to \infty$, we have $\alpha |\hat x-\hat y|^2 \to 0$ which implies $|\hat x-\hat y| \to 0$. Therefore, the right-hand side above goes to $0$, leading to a contradiction.
	\end{proof}
	
	\begin{corollary}[Comparison Principle II]\label{cor:comp_Lipschitz}
		Let $\Omega \subset\Rn$ be bounded.
Assume \emph{\ref{it:F_modulus}},  \emph{\ref{it:G_Lipschitz}}, \emph{\ref{it:G_modulus}}, and \emph{\ref{it:S_mon}}, \emph{\ref{it:S_bdd}}.
Instead of \emph{\ref{it:F_inc}}, $F$ is asked to satisfy
		\begin{align*}
&		| F(x, t, r, p, X) - F(x, t, s, p, X) | \le L_F |r-s| \\
&\text{for some $L_F>0$ and all} \  r, s\in \R, \, p \in \R^n, \, X \in {\rm Sym}(n).
		\end{align*}
		Let $u$ be a viscosity subsolution and $v$ be a viscosity supersolution of \eqref{eq:pde} with $u \le v$ on $\partial_P (\Omega \times I)$. Then,  $u \le v \text{ in } \Omega \times I$.
	\end{corollary}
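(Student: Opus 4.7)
The plan is to repeat the proof of Theorem~\ref{thm:comp} essentially verbatim, with a single modification to compensate for the loss of monotonicity of $F$ in the $r$-variable. First, I would apply the same exponential rescaling $U(x,t) := e^{-\lambda t}u(x,t)$, $V(x,t) := e^{-\lambda t}v(x,t)$ for a parameter $\lambda>0$ to be fixed. A short direct calculation shows that the rescaled function $\tilde F(x,t,r,p,X) := e^{-\lambda t} F(x,t,e^{\lambda t}r, e^{\lambda t}p, e^{\lambda t}X)$ inherits the Lipschitz bound in $r$ with the same constant $L_F$, since the two factors $e^{-\lambda t}$ and $e^{\lambda t}$ cancel; likewise, \ref{it:F_modulus} survives with $\omega_F$ rescaled by $e^{-\lambda t}$, and \ref{it:G_Lipschitz}, \ref{it:G_modulus} remain valid for $\tilde G$.

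Next, I would run the contradiction argument of Theorem~\ref{thm:comp} unchanged up to the point of subtracting \eqref{eq:sub} from \eqref{eq:super}. The doubling of variables via $-\tfrac\alpha2|x-y|^2$, the penalization $-\tfrac{\gamma}{T-t}$, the localization of the maximizer in $\Omega \times \Omega \times (0,T)$ for $\alpha$ large, the jet extraction, and the use of maximal monotonicity \ref{it:S_mon} to conclude $\mu - \nu \le 0$ from $U(\hat x, \hat t) > V(\hat y, \hat t)$ together with $a>b$ (so that $e^{\lambda \hat t}(a+\lambda U) > e^{\lambda \hat t}(b+\lambda V)$) all carry over literally. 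In particular the nonpositive term $(\mu-\nu)\tilde G$ can still be discarded since $\tilde G \ge 0$.

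The only step that changes is the one previously annotated as ``$+0$'' in the proof of Theorem~\ref{thm:comp}: without \ref{it:F_inc}, the difference
\[
\tilde F(\hat x, \hat t, V, \alpha(\hat x-\hat y), X) - \tilde F(\hat x, \hat t, U, \alpha(\hat x-\hat y), X)
\]
is no longer nonpositive but is merely bounded by $L_F |U-V|$. Collecting terms, the estimate of Theorem~\ref{thm:comp} then becomes
\[
\tfrac{\gamma}{(T-\hat t)^2} \le (L_F + \mathcal{S}_\mathrm{max} L_G - \lambda)(U-V) + (1+\mathcal{S}_\mathrm{max})\bigl(\omega_F + \omega_G\bigr),
\]
where the moduli are evaluated at $|\hat x-\hat y| + \alpha|\hat x - \hat y|^2$. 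Choosing $\lambda \ge L_F + \mathcal{S}_\mathrm{max} L_G$ kills the first term, and sending $\alpha \to \infty$ forces the moduli to zero, yielding the same contradiction as in Theorem~\ref{thm:comp}.

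There is no genuine obstacle to this argument: the Lipschitz constant $L_F$ plays exactly the role that monotonicity played before, and the only quantitative change is that $\lambda$ must be taken somewhat larger. The only point worth writing out in detail is the verification that the Lipschitz property in $r$ is preserved by the exponential rescaling, which is precisely the cancellation $e^{-\lambda t}\cdot L_F \cdot e^{\lambda t} = L_F$ noted above.
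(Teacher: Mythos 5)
Your proposal is correct and rests on the same idea as the paper's proof: an exponential rescaling with $\lambda \ge L_F + \mathcal{S}_\mathrm{max} L_G$ so that the Lipschitz dependence of $F$ on $r$ is absorbed into the $\lambda(U-V)$ term. The paper merely packages this differently — it folds $\lambda r$ into the rescaled $\tilde{F}$, checks that this $\tilde{F}$ satisfies \ref{it:F_inc} once $\lambda \ge L_F$, and then invokes Theorem \ref{thm:comp} rather than rerunning the doubling argument with the ``$+0$'' step replaced by $L_F|U-V|$ as you do — but the mechanism is identical.
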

	\begin{proof}
		The result follows from the first comparison principle and an exponential scaling. Proceeding as in the proof of Theorem \ref{thm:comp}, we let $\lambda > 0$ and define $U(x, t) \coloneqq e^{-\lambda t} u(x, t)$ and $V(x, t) \coloneqq e^{-\lambda t} v(x, t)$. Recall that $U$ and $V$ are sub- and supersolution to
		\[
		W_t + \lambda W + e^{-\lambda t} F(x, t, e^{\lambda t}W, e^{\lambda t}\nabla W, e^{\lambda t}D^2W) \in \mathcal{S}(e^{\lambda t}(W_t + \lambda W)) \tilde{G}(x, t, W, \nabla W)
		\]
		with $\tilde{G}(x, t, r, p) \coloneqq e^{-\lambda t} G(x, t, e^{\lambda t}r, e^{\lambda t}p)$. The function $\tilde{G}$ satisfies \ref{it:G_Lipschitz} and \ref{it:G_modulus} with the same constant by rescaling the modulus by $e^{-\lambda t}$. On the other hand, the function $\tilde{F}(x, t, r, p, X) \coloneqq \lambda r + e^{-\lambda t} F(x, t, e^{\lambda t}r, e^{\lambda t}p, e^{\lambda t}X)$ clearly satisfies \ref{it:F_modulus}. Let $r \le s$ and compute
		\begin{align*}
		\tilde{F}(x, t, r, p, X) - \tilde{F}(x, t, s, p, X) =&~\lambda (r-s) + e^{-\lambda t} F(x, t, e^{\lambda t}r, e^{\lambda t}p, e^{\lambda t}X) \\
		&- e^{-\lambda t} F(x, t, e^{\lambda t}s, e^{\lambda t}p, e^{\lambda t}X)\\
		\le&~\lambda (r-s) + L_F |r-s| \\
		= &~(L_F -\lambda) |r-s|
		\end{align*}
		which shows that, choosing $\lambda \ge L_F$, $\tilde{F}$ fulfills \ref{it:F_inc} as well. We can now apply Theorem \ref{thm:comp} and obtain that $U \le V$, hence $u \le v$.
	\end{proof}

	\begin{remark}\label{rmk:torus}
		The comparison principles proved above would also hold on the flat torus $\mathbb{T}^n$, the main difference being that the parabolic boundary $\partial_P(\mathbb{T}^n \times I) = \mathbb{T}^n \times \{0\}$, i.e., we only have to specify an initial condition. To see that  comparison holds just note that we can assume $\mathbb{T}^n  = \R^n/{\mathbb Z}^n \sim [-1, 1]^n$. In the proof of the comparison principle a maximum of $M_{\alpha, \gamma}$ will always be achieved in the interior of some  larger domain say $[-2, 2]^n$, letting condition $u \le v$ on $\partial [-2, 2]^n \times I$ redundant.
	\end{remark}
	
	As it is usual in the treatment of viscosity solutions, assumptions on $F$ and $G$ have to be strengthened in order to be able to prove comparison results on the whole space $\Rn$. We hence replace F2) and G2) by the following conditions:  
\begin{enumerate}[label=FU)]
		\item\label{it:F_growth} The function $F$ can be written as $F(x, t, r, p, X) = F_1(x, t, r) + F_2(t, p, X)$ with $F_1$ and $F_2$ continuous. Moreover, there are $C_{F_1} > 0$, $K_F > 0$, and moduli of continuity $\omega_{F_1}$ and $\omega_{F_2}$, such that for all $x, y \in \Rn$, $t \in I$, $r\in \R$, $a, b \in \Rn$, $p, q \in \Rn$, and $X, Y \in \operatorname{Sym}(n)$, the following conditions hold:
		\begin{enumerate}[label=(\roman*)]
			\item $F_1(y, t, r) - F_1(x, t, r) \le \omega_{F_1}(|x-y|)$.
			\item $F_2(t, \alpha(x-y), Y) - F_2(t, \alpha(x-y), X) \le \omega_{F_2}(|x-y| + \alpha |x-y|^2)$, 
			whenever
			\[
			-3\alpha \left(\begin{array}{cc}\operatorname{Id} & 0 \\ 0 & \operatorname{Id}\end{array}\right) \le \left(\begin{array}{cc}X & 0 \\ 0 & -Y\end{array}\right)\le 3\alpha \left(\begin{array}{cc}\operatorname{Id} & -\operatorname{Id} \\ -\operatorname{Id} & \operatorname{Id}\end{array}\right).\]
			\item $F_1(y, t, r) - F_1(x, t, r) \le C_{F_1} + K_F |x-y|$.
		\end{enumerate}
		
	\end{enumerate}
	\begin{enumerate}[label=GU)]
		\item\label{it:G_growth} The function $G$ can be written as $G(x, t, r, p) = G_1(x, t, r) + G_2(t, p)$ with $G_1$ and $G_2$ continuous. Moreover, there are $C_{G_1} > 0$, 
		$K_G > 0$, and a modulus of continuity $\omega_{G_1}$, such that for all $x, y \in \Rn$, $t \in I$, $r\in \R$, and $p, q \in \Rn$, the following conditions hold:
		\begin{enumerate}[label=(\roman*)]
			\item $G_1(x, t, r) - G_1(y, t, r) \le \omega_{G_1}(|x-y|)$,
			\item $G_1(x, t, r) - G_1(y, t, r) \le C_{G_1} + K_G |x-y|$,
		\end{enumerate}
	\end{enumerate}
	
	\begin{theorem}[Comparison Principle in $\Rn$]\label{thm:comp_rn}
Assume \emph{\ref{it:F_inc}}, \emph{\ref{it:F_growth}}, \emph{\ref{it:G_Lipschitz}}, \emph{\ref{it:G_growth}}, and  \emph{\ref{it:S_mon}}, \emph{\ref{it:S_bdd}} on $\Rn$, and let $u$ be a viscosity subsolution and $v$ be a viscosity supersolution of \eqref{eq:pde} with 
		\begin{equation}\label{eq:comp_rn_assumption}
			u(x, t) - v(y, t) \le L(1+|x|+|y|) \text{ for all } (x, y, t) \in \Rn \times \Rn \times I
		\end{equation}
		for some $L > 0$ which is independent of $t$. If $u(\cdot, 0) \le v(\cdot, 0)$ then $u \le v \text{ in } \Rn \times I$.
	\end{theorem}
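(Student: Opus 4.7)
The plan is a standard unbounded-domain adaptation of the doubling-of-variables argument used in Theorem~\ref{thm:comp}. The key new ingredient is a coercive penalty in space that localizes the supremum, relying on the linear-growth bound \eqref{eq:comp_rn_assumption} together with the decompositions \ref{it:F_growth}, \ref{it:G_growth}. I first perform the exponential rescaling $U \coloneqq e^{-\lambda t} u$, $V \coloneqq e^{-\lambda t} v$ from Theorem~\ref{thm:comp} for $\lambda \ge \mathcal{S}_\mathrm{max} L_G$, which preserves all structural assumptions (the decomposition constants and moduli being rescaled by $e^{-\lambda t}$) as well as the growth bound \eqref{eq:comp_rn_assumption} and the initial condition. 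Assume for contradiction that $\sup_{\R^n \times I}(U-V) \eqqcolon \delta > 0$.

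For $\alpha\gg1$ and $\beta, \gamma > 0$ small, I introduce the penalized doubling
\[
\Phi_{\alpha,\beta,\gamma}(x, y, t) \coloneqq U(x, t) - V(y, t) - \tfrac{\alpha}{2} |x-y|^2 - \beta\bigl(\psi(x) + \psi(y)\bigr) - \tfrac{\gamma}{T-t},
\]
where $\psi(x) \coloneqq (1+|x|^2)^{1/2}$ has $|\nabla\psi| \le 1$ and $\|D^2\psi\|_\infty \le C$. Thanks to \eqref{eq:comp_rn_assumption}, the functional tends to $-\infty$ at spatial infinity for each fixed $\beta>0$, so a maximizer $(\hat x, \hat y, \hat t)$ exists with $|\hat x|, |\hat y|$ uniformly bounded in $\alpha$, and $\Phi_{\alpha,\beta,\gamma}(\hat x, \hat y, \hat t) \ge \delta/2$ once $\beta$ is small. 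The initial condition rules out $\hat t = 0$, and the standard doubling estimate yields $\alpha |\hat x - \hat y|^2 \to 0$ and $|\hat x - \hat y| \to 0$ as $\alpha \to \infty$.

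The parabolic Jensen--Ishii lemma \cite[Theorem~8.3]{Crandall1992} then supplies $(a, p_U, X) \in \mathcal{P}^{2,+} U(\hat x, \hat t)$ and $(b, p_V, Y) \in \mathcal{P}^{2,-} V(\hat y, \hat t)$ with $p_U = \alpha(\hat x - \hat y) + \beta\nabla\psi(\hat x)$, $p_V = \alpha(\hat x - \hat y) - \beta\nabla\psi(\hat y)$, $a - b = \gamma/(T-\hat t)^2$, and the three-block matrix inequality of Theorem~\ref{thm:comp} at level $3\alpha$ up to an $O(\beta)$ correction coming from $D^2\psi$. Subtracting the sub- and supersolution inequalities and splitting $F = F_1 + F_2$, $G = G_1 + G_2$, one estimates the $F_1$- and $G_1$-differences through the moduli in \ref{it:F_growth}(i), \ref{it:G_growth}(i) combined with the monotonicity \ref{it:F_inc} and Lipschitz bound \ref{it:G_Lipschitz}, while the linear-growth clauses \ref{it:F_growth}(iii), \ref{it:G_growth}(ii) ensure a priori that all intermediate $|\hat x - \hat y|$-dependent terms remain uniformly bounded throughout the doubling procedure; the $F_2$- and $G_2$-pieces, whose momentum arguments differ only by $O(\beta)$, are controlled by continuity and by \ref{it:F_growth}(ii). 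Carrying out the monotonicity manipulation of Theorem~\ref{thm:comp} verbatim, so that $\mu \le \nu$ via \ref{it:S_mon} and hence $(\mu - \nu) G(\hat x, \hat t, U, p_U) \le 0$, while the choice $\lambda \ge \mathcal{S}_\mathrm{max} L_G$ absorbs the $|U-V|$-term via \ref{it:G_Lipschitz} and \ref{it:S_bdd}, one arrives at
\[
\tfrac{\gamma}{T^2} \le (1 + \mathcal{S}_\mathrm{max})\bigl[\omega_{F_1}(|\hat x - \hat y|) + \omega_{F_2}(|\hat x - \hat y| + \alpha|\hat x - \hat y|^2) + \omega_{G_1}(|\hat x - \hat y|) + o_\beta(1)\bigr].
\]
Sending $\alpha \to \infty$ first and then $\beta \to 0$ produces the desired contradiction.

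The main obstacle is the simultaneous handling of the spatial penalty and the non-compactness of $\R^n$: $\beta\psi$ must be strong enough to localize the maximizer—which is exactly what forces the linear-growth requirement \eqref{eq:comp_rn_assumption}—yet weak enough that its perturbations to the gradients and Hessians at the maximizer vanish in the limit, without allowing the $x$-dependent parts $F_1$, $G_1$ to blow up along the way. This is precisely the role of the decompositions \ref{it:F_growth}, \ref{it:G_growth}: the linear-growth clauses (iii), (ii) provide the uniform a priori control that makes the doubling legitimate, while the moduli in (i) close the argument in the final limit $|\hat x - \hat y| \to 0$. The remainder reduces to the same bookkeeping carried out in Theorem~\ref{thm:comp}.
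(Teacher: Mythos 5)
Your proposal has a genuine gap in the localization step, and it sits exactly where the paper's argument has an extra stage that you omit. You penalize with $\beta\bigl(\psi(x)+\psi(y)\bigr)$, $\psi(x)=(1+|x|^2)^{1/2}$, which has unit asymptotic slope. Against the admissible growth $U(x,t)-V(y,t)\le L(1+|x|+|y|)$, this penalty is coercive only when $\beta>L$: with $\beta\le L$, setting $x=y$ gives $\Phi(x,x,t)\ge U(x,t)-V(x,t)-2\beta\psi(x)-\tfrac{\gamma}{T-t}$, which the hypotheses do not prevent from diverging to $+\infty$, so no maximizer need exist. But your argument ends by sending $\beta\to 0$, which runs straight into this uncontrolled regime; and if instead you keep $\beta>L$ fixed, the penalty is at least $2\beta\ge 2L$ everywhere, so the claim $\Phi(\hat x,\hat y,\hat t)\ge\delta/2$ ``for $\beta$ small'' is no longer available. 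Replacing the linear $\psi$ by a superlinear one such as $\epsilon(|x|^2+|y|^2)$ does localize for every $\epsilon>0$, but then all one can extract from \eqref{eq:comp_rn_assumption} is $\epsilon(|\hat x|^2+|\hat y|^2)\lesssim L(1+|\hat x|+|\hat y|)$, hence $|\hat x|\lesssim L/\epsilon$, so the $O(\epsilon\hat x)$ gradient corrections do \emph{not} vanish as $\epsilon\to 0$ either.

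What is missing is precisely Step~1 of the paper's proof: a preliminary Jensen--Ishii argument, run with the penalty $2K(1+|x-y|^2)^{1/2}+\beta_R(x)-\beta_R(y)$ where $\beta_R$ has asymptotic slope $\ge 2L$ (to localize) but tends to $0$ pointwise as $R\to\infty$ (to be removed afterward), which upgrades the assumed linear growth to the much stronger estimate
\begin{equation*}
\sup_{(x,y,t)}\Bigl(U(x,t)-V(y,t)-2K|x-y|-\tfrac{\gamma}{T-t}\Bigr)<\infty .
\end{equation*}
Only once this is available does the final doubling with $\tfrac{\alpha}{2}|x-y|^2+\epsilon(|x|^2+|y|^2)$ yield maximizers for which $\alpha|\hat x-\hat y|^2$ and $\epsilon(|\hat x|^2+|\hat y|^2)$ are bounded uniformly in $\epsilon$, so that $\epsilon\to 0$ can be taken before $\alpha\to\infty$. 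The linear-growth clauses \ref{it:F_growth}(iii) and \ref{it:G_growth}(ii) are consumed entirely in that Step~1; in your write-up they appear only as a vague invocation of ``uniform a priori control,'' but without the intermediate growth estimate they do not close the argument. The rest of your proposal (exponential rescaling, monotonicity via \ref{it:S_mon}, absorbing the $|U-V|$-term by $\lambda\ge\mathcal{S}_\mathrm{max}L_G$, using the decomposition $F=F_1+F_2$, $G=G_1+G_2$ in the final contradiction) matches the paper's Step~2, so the defect is specifically the missing two-stage structure.
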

	\begin{proof}
		We will again start by an exponential rescaling, i.e., take $\lambda > 0$ and define $U(x, t) \coloneqq e^{-\lambda t} u(x, t)$ and $V(x, t) \coloneqq e^{-\lambda t}v(x, t)$. In this case, $U$ and $V$ are sub- and supersolutions to the equation
		\[
		W_t + \lambda W + F(x, t, W, \nabla W, D^2 W) \in \mathcal{S}(e^{\lambda t}(W_t + \lambda W)) G(x, t, W, \nabla W),
		\]
		with appropriately redefined $F$ and $G$ (see the proof of Theorem \ref{thm:comp}). We now proceed in steps.
		
		\emph{Step 1: Growth estimate.} The first step of the proof consists in proving that the difference $U-V$ satisfies the growth estimate 
		\begin{equation}
		\sup_{(x, y, t) \in \Rn \times \Rn \times I} U(x, t) - V(y, t) - 2K|x-y| - \tfrac{\gamma}{T-t} < \infty, \label{eq:comp_rn_growth}
		\end{equation}
		where $K \coloneqq K_{F} + \mathcal{S}_{\textrm{max}} K_G$. Following \cite[Theorem~5.1]{Crandall1992}, we choose a family $\beta_R$ of $C^2(\Rn)$ functions such that
		\begin{enumerate}[label=$\roman*)$]
			\item $\beta_R \ge 0$,
			\item $\liminf_{|x| \to \infty} \frac{\beta_R(x)}{|x|} \ge 2L$,
			\item $|D\beta_R(x)| + |D^2\beta_R(x)| \le C$, for $R \ge 1$, $x\in \Rn$,
			\item $\lim_{R\to \infty} \beta_R(x) = 0$ for $x\in \Rn$,
		\end{enumerate}
		where $C > 0$ is a positive constant. A suitable choice would be a radially symmetric cutoff function, namely $\beta_R = 1$ on $B_R(0):= \{r \in \R \ : \ |r|< R\}$ and $\beta_R = 0$ on $B_{2R}(0)^c$. Let us now define the function
		\[
		\Phi(x, y, t) \coloneqq U(x, t) - V(y, t) - 2K(1+|x-y|^2)^{\frac{1}{2}} - \left( \beta_R(x) - \beta_R(y)\right) - \tfrac{\gamma}{T-t}.
		\]
		Note that condition $ii)$ implies that there is a constant $r(R)$ such that $\beta_R(x) \ge \frac{3}{2}L|x|$ if $|x| > r(R)$. Moreover by \eqref{eq:comp_rn_assumption}, we obtain for $|x|, |y| > r(R)$ the estimate
		\[
		\Phi(x, y, t) \le L(1+|x|+|y|) - 2K - \tfrac{3}{2}L|x| - \tfrac{3}{2}L|y| - \tfrac{\gamma}{T} = L - 2K - \tfrac{\gamma}{T} - \tfrac{1}{2}L(|x|+|y|).
		\]
		Hence, the function has to attain its supremum in a compact subset of $\Rn \times \Rn \times I$.  Let $(\hat{x}, \hat{y}, \hat{t})$ be this maximum. First we consider the case $\Phi(\hat{x}, \hat{y}, \hat{t}) \le 0$. We then have for $R$ big enough
		\begin{align*}
		&U(x, t) - V(y, t) - 2K|x-y| - \tfrac{\gamma}{T-t} \\
		&\le \Phi(\hat{x}, \hat{y}, \hat{t}) + 2K(1+|x-y|^2)^{\frac{1}{2}} - 2K|x-y| + \left( \beta_R(x) - \beta_R(y)\right) \\
		&\le 2K+ \left( \beta_R(x) - \beta_R(y)\right) < \infty,
		\end{align*}
		where the right-hand side can be chosen independently of $R$. Therefore, the asserted inequality \eqref{eq:comp_rn_growth} holds. Secondly assume the other case, i.e., $\Phi(\hat{x}, \hat{y}, \hat{t}) > 0$, which implies that
		\begin{equation}
		2K|\hat{x}-\hat{y}| \le U(\hat{x}, \hat{t}) - V(\hat{y}, \hat{t}) - \tfrac{\gamma}{T-\hat{t}}. \label{eq:comp_rn_bound}
		\end{equation}
		In case $\hat{t} = 0$ then we would get
		\[
		0 < \Phi(\hat{x}, \hat{y}, \hat{t}) \le - 2K(1+|\hat{x}-\hat{y}|^2)^{\frac{1}{2}} - \left( \beta_R(\hat{x}) - \beta_R(\hat{y})\right) - \tfrac{\gamma}{T-\hat{t}} \le 0,
		\]
		which is a contradiction. Hence, the maximum $(\hat{x}, \hat{y}, \hat{t})$ lies inside $\Rn \times \Rn \times (0, T)$, yielding
		\begin{align*}
		(a, p + D\beta_R(\hat{x}), X + D^2\beta_R(\hat{x})) &\in \mathcal{P}^{2, +} U(\hat{x}, \hat{t}),\\
		(b, p - D\beta_R(\hat{y}), -X - D^2\beta_R(\hat{y})) &\in \mathcal{P}^{2, -} V(\hat{y}, \hat{t}),
		\end{align*}
		with $a = b+ \frac{\gamma}{(T-\hat{t})^2}$, $p = 2K \frac{\hat{x}-\hat{y}}{1+ |\hat{x}-\hat{y}|^2}$, and $$X = \frac{2K}{1+|\hat{x}-\hat{y}|^2} \operatorname{Id} - 4K \frac{\hat{x}-\hat{y}}{1+|\hat{x}-\hat{y}|^2} \otimes \frac{\hat{x}-\hat{y}}{1+|\hat{x}-\hat{y}|^2}.$$
		This implies that one can find $\mu \in \mathcal{S}(e^{\lambda t}(a+\lambda U))$ and $\nu \in \mathcal{S}(e^{\lambda t}(b+\lambda V))$ such that
		\begin{align}
		a + \lambda U + F(\hat{x}, \hat{t}, U, p + D\beta_R(\hat{x}), X + D^2\beta_R(\hat{x})) &\le \mu G(\hat{x}, \hat{t}, U, p + D\beta_R(\hat{x})) \label{eq:comp_rn_subsol}, \\
		b + \lambda V + F(\hat{y}, \hat{t}, V, p - D\beta_R(\hat{y}), -X - D^2\beta_R(\hat{y})) &\ge \nu G(\hat{y}, \hat{t}, V, p - D\beta_R(\hat{y})) \label{eq:comp_rn_supersol}.
		\end{align}
		Subtracting \eqref{eq:comp_rn_subsol} from \eqref{eq:comp_rn_supersol} shows that 
		\begin{align*}
		a - b \le&~ \lambda (V-U) \\
		&+ F(\hat y,\hat t, V, p - D\beta_R(\hat{y}), -X - D^2\beta_R(\hat{y}))
		- F(\hat x,\hat t, U, p + D\beta_R(\hat{x}), X + D^2\beta_R(\hat{x})) \\
		&- \nu\left( G(\hat y,\hat  t, V, p - D\beta_R(\hat{y}))
		+ G(\hat x,\hat  t, U, p + D\beta_R(\hat{x}))\right) \\
		&+ (\mu - \nu) G(\hat x,\hat  t, U, p + D\beta_R(\hat{x})).
		\end{align*}
		As $V \le U$ and $b \le a$, we have $\mu \le \nu$ and we can estimate the last term in the right-hand side above by $0$. To treat other terms we use condition \ref{it:F_inc}, \ref{it:F_growth}, \ref{it:G_Lipschitz}, and \ref{it:G_growth}. As $F_2$ and $G_2$ are continuous there is $C(\lambda, p, X, D\beta_R, D^2\beta_R) > 0$ locally bounded such that
		\begin{align*}
		\frac{\gamma}{(T-\hat{t})^2}&\le ~(\lambda-\mathcal{S}_{\textrm{max}}L_G)(V-U) + C(\lambda, p, X, D\beta_R, D^2\beta_R) \\
&\quad+ (K_F + \mathcal{S}_{\textrm{max}}K_G) |\hat x - \hat y|
		\end{align*}
		As $p, X$ are bounded by $K$ and $D\beta_R$ and $D^2 \beta_R$ are bounded independently of $R$ we will introduce a constant $C(\lambda) > 0$ which is independent of $R$ and obtain
		\[
		(\lambda-\mathcal{S}_{\textrm{max}}L_G)(U-V) \le C(\lambda) + K |\hat x - \hat y|.
		\]
		As $U \ge V$ at the maximum point, we can choose $\lambda > C_{F_2} + C_{G_2} + 1$, fixing the constant $C(\lambda)$, to obtain with \eqref{eq:comp_rn_bound}
		\[
		U-V \le C(\lambda) + \tfrac{1}{2} (U-V).
		\]
		Hence, the difference $U-V$ is bounded which implies that
		\[
		\Phi(x, y, t) \le \Phi(\hat{x}, \hat{y}, \hat{t}) \le U(\hat{x}, \hat{t}) - V(\hat{y}, \hat{t}) \le 2C(\lambda).
		\]
		By sending $R \to \infty$ we obtain
		\[
		U(x, t) - V(y, t) - 2K(1+|x-y|^2)^{\frac{1}{2}} - \tfrac{\gamma}{T-t} \le 2C(\lambda)
		\]
		and \eqref{eq:comp_rn_growth} is proved.
		
		\emph{Step 2: Comparison principle.} We proceed by contradiction: Let us assume that
		\[
		\sup_{\substack{x\in \Rn \\ t\in [0, T)}} \left\{ U(x, t) - V(x, t)\right\} \eqqcolon \delta > 0
		\]
		and define
		\[
		M_{\alpha, \epsilon, \gamma } \coloneqq \sup_{\substack{x, y \in \Rn \\ t\in I}} \left\{ U(x, t) - V(y, t) - \tfrac{\alpha}{2}|x-y|^2 - \epsilon(|x|^2+|y|^2) - \tfrac{\gamma}{T-t}\right\}.
		\]
		By \eqref{eq:comp_rn_growth}, $M_{\alpha, \epsilon, \gamma}$ is bounded and we have $M_{\alpha, \epsilon, \gamma} > \delta/2$ for $\gamma, \epsilon$ small enough. Furthermore, we see that $M_{\alpha, \epsilon, \gamma}$ is attained at $(\hat{x}, \hat{y}, \hat{t})$ satisfying
		\begin{align} \label{eq:comp_rn_estimate}
		\tfrac{\alpha}{2}|\hat x-\hat y|^2 + \epsilon(|\hat x|^2+|\hat y|^2) &\le U(\hat x, \hat t) - V(\hat y, \hat t) - \tfrac{\gamma}{T-\hat t}
		\le 2K|\hat{x} - \hat{y}| + C \\
		\nonumber
		&\le \tfrac{\alpha}{4} |\hat{x}-\hat{y}|^2 + \tfrac{4K^2}{\alpha}  + C,
		\end{align}
		for some constant $C = C(\lambda, C_{F_1}, C_{F_2}) > 0$. Hence, the maximum is achieved at some $(\hat{x}, \hat{y}, \hat{t}) \in \Rn \times \Rn \times (0, T)$ and we can again apply the Jensen-Ishii Lemma to obtain that
		\begin{align*}
		(a, \alpha(\hat{x}-\hat{y})+2\epsilon \hat{x}, X + 2\epsilon\operatorname{Id}) &\in \mathcal{P}^{2, +} U(\hat{x}, \hat{t}), \\
		(b, \alpha(\hat{x}-\hat{y})-2\epsilon \hat{y}, Y-2\epsilon\operatorname{Id}) &\in \mathcal{P}^{2, -} V(\hat{y}, \hat{t}) ,
		\end{align*}
		with $a-b = \tfrac{\gamma}{(T-\hat{t})^2}$ and $$-3\alpha \left(\begin{array}{cc}\operatorname{Id} & 0 \\ 0 & \operatorname{Id}\end{array}\right) \le \left(\begin{array}{cc}X & 0 \\ 0 & -Y\end{array}\right)\le 3\alpha \left(\begin{array}{cc}\operatorname{Id} & -\operatorname{Id} \\ -\operatorname{Id} & \operatorname{Id}\end{array}\right).$$
		This implies that $X \le Y$. As $U$ is a subsolution and $V$ is a supersolution, we can find $\mu \in \mathcal{S}(e^{\lambda t}(a+\lambda U))$ and $\nu \in \mathcal{S}(e^{\lambda t}(b+\lambda V))$ such that
		\begin{align}
		&a + \lambda U + F(\hat x,\hat t, U, \alpha(\hat{x}-\hat{y})+2\epsilon\hat x, X+ 2\epsilon\operatorname{Id}) \nonumber\\
&\quad- \mu G(\hat x,\hat  t, U, \alpha(\hat{x}-\hat{y}) + 2\epsilon\hat x) \le 0,\label{eq:comp_rn_proof_sub} \\
		&b + \lambda V + F(\hat y,\hat t, V, \alpha(\hat{x}-\hat{y}) - 2\epsilon\hat y, Y- 2\epsilon\operatorname{Id}) \nonumber\\
&\quad- \nu G(\hat y,\hat  t, V, \alpha(\hat{x}-\hat{y})- 2\epsilon\hat y) \ge 0.\label{eq:comp_rn_proof_super}
		\end{align}
		By subtracting \eqref{eq:comp_rn_proof_sub} from \eqref{eq:comp_rn_proof_super}, we obtain
		\begin{align*}
		a - b \le&~ \lambda (V-U)\\
		&+ F(\hat y,\hat t, V, \alpha(\hat{x}-\hat{y}) - 2\epsilon \hat y, Y- 2\epsilon\operatorname{Id}) - F(\hat x,\hat t, U, \alpha(\hat{x}-\hat{y}) + 2\epsilon \hat x, X + 2\epsilon\operatorname{Id}) \\
		&- \nu G(\hat y,\hat t, V, \alpha(\hat{x}-\hat{y}) - 2\epsilon \hat{y}) + \mu G(\hat x,\hat  t, U, \alpha(\hat{x}-\hat{y}) + 2\epsilon \hat{x}).
		\end{align*}
		By adding and subtracting terms and indicating $\hat{p} \coloneqq \alpha(\hat{x}-\hat{y})$ one has that
		\begin{align}
\label{ineq}		\tfrac{\gamma}{(T-\hat{t})^2} \le&~ \lambda (V-U) \\
		&+ F(\hat y,\hat t, V, \hat{p}- 2\epsilon \hat y, Y -2\epsilon\operatorname{Id}) - F(\hat x,\hat t, V, \hat{p}+ 2\epsilon \hat y, X +2\epsilon\operatorname{Id}) \nonumber\\
		&+F(\hat x,\hat t, V, \hat{p}+ 2\epsilon \hat y, X +2\epsilon\operatorname{Id})- F(\hat x,\hat t, U, \hat{p}+ 2\epsilon \hat x, X+2\epsilon\operatorname{Id}) \nonumber\\
		&- \nu G(\hat y,\hat  t, V, \hat{p}- 2\epsilon \hat y)+ \nu G(\hat x,\hat  t, V, \hat{p}+ 2\epsilon \hat x) \nonumber\\
		&- \nu G(\hat x,\hat  t, V, \hat{p}+ 2\epsilon \hat x)+ \nu G(\hat x,\hat  t, U, \hat{p}+ 2\epsilon \hat x) \nonumber\\
		&- \nu G(\hat x,\hat  t, U, \hat{p}+ 2\epsilon \hat x) + \mu G(\hat x,\hat  t, U, \hat{p}+ 2\epsilon \hat x) \nonumber\\
		\le&~ \lambda (V-U) \nonumber\\&+ F(\hat y,\hat t, V, \hat{p}- 2\epsilon \hat y, Y -2\epsilon\operatorname{Id}) - F(\hat x,\hat t, V, \hat{p}+ 2\epsilon \hat y, X +2\epsilon\operatorname{Id}) + 0 \nonumber\\
		&-\nu G(\hat y,\hat  t, V, \hat{p}- 2\epsilon \hat y)+ \nu G(\hat x,\hat  t, V, \hat{p}+ 2\epsilon \hat x) \nonumber\\
		&+ \mathcal{S}_{\textrm{max}} L_G|U-V|+ (\mu - \nu)G(\hat x,\hat  t, U, \hat{p}+ 2\epsilon \hat x). \nonumber
		\end{align}
		In the second inequality we used \ref{it:F_inc} and \ref{it:G_Lipschitz}, along with $U > V$, $X\le Y$, and $a\ge b$. As $e^{\lambda t}(a+\lambda U) > e^{\lambda t}(b + \lambda V)$, \ref{it:S_mon} implies that $\mu-\nu \le 0$ and by the positivity of $G$ it follows that the last term is negative.

		To treat the other terms in \eqref{ineq} we use \ref{it:F_growth}, \ref{it:G_growth}, and \ref{it:S_bdd} to obtain,
		\begin{align*}
		\tfrac{\gamma}{(T-\hat{t})^2} \le&~ (\lambda-\mathcal{S}_{\textrm{max}}L_G) (V-U) + F_1(\hat y, \hat t, V) - F_1(\hat x, \hat t, V) \\&+F_2(\hat t,\hat p- 2\epsilon \hat y, Y -2\epsilon\operatorname{Id}) - F_2(\hat t, \hat p + 2\epsilon \hat x, X + 2\epsilon\operatorname{Id}) \\
		&+\mathcal{S}_\textrm{max} | G_1(\hat y, \hat  t, V) - G_1(\hat x,\hat  t, V) | +\mathcal{S}_\textrm{max} | G_2(t,\hat p + 2\epsilon \hat y) - G_2(t,\hat p- 2\epsilon \hat y) | \\
		\le&~ (\lambda-\mathcal{S}_{\textrm{max}}L_G) (V-U) + \omega_{F_1}(|\hat x - \hat y|) \\&+ F_2(\hat t,\hat p- 2\epsilon \hat y, Y -2\epsilon\operatorname{Id}) - F_2(\hat t,\hat p + 2\epsilon \hat x, X + 2\epsilon\operatorname{Id}) \\
		&+\mathcal{S}_\textrm{max}\omega_{G_1}(|\hat x - \hat y|) +\mathcal{S}_\textrm{max} | G_2(t,\hat p + 2\epsilon \hat y) - G_2(t,\hat p- 2\epsilon \hat y) |
		\end{align*}
		As equation \eqref{eq:comp_rn_estimate} implies that $\alpha |\hat x - \hat y|^2$ is bounded independently of $\epsilon$, and therefore also $p, q, X$, and $Y$, we can take the limit inferior as $\epsilon \to 0$ of the above  inequality and obtain
		\begin{align*}
		\tfrac{\gamma}{(T-\hat{t})^2} \le&~(\lambda-\mathcal{S}_{\textrm{max}}L_G) (V-U) + \omega_{F_1}(|\hat x - \hat y|) + F_2(\hat t,\hat p, Y) - F_2(\hat t,\hat p, X) \\
		&~+\mathcal{S}_\textrm{max}\omega_{G_1}(|\hat x - \hat y|) +\mathcal{S}_\textrm{max} | G_2(t,\hat p) - G_2(t,\hat p) |.
		\end{align*}
		Note that we used the continuity of $F_2$ and of $G$. Finally, by choosing $\lambda \ge \mathcal{S}_{\textrm{max}}L_G$ we can use \ref{it:F_growth} to reach a contradiction as follows
		\begin{align*}
		0 < \tfrac{\gamma}{T}  &\le\liminf_{\alpha \to \infty}\omega_{F_1}(|\hat x - \hat y|) +\omega_{F_2}(|\hat x - \hat y| + \alpha |\hat x - \hat y|^2)+\mathcal{S}_\textrm{max}\omega_{G_1}(|\hat x - \hat y|)= 0.
		\end{align*}
		Hence, $U \le V$ and therefore $u \le v$.
	\end{proof}

\subsection{Existence of solutions: Perron's Method}
	Now that we have the comparison principles at hand, we can construct solutions by the Perron method \cite{Ishii1987}. We will make use of some properties of the set-valued mapping $\mathcal{S}$ which we record in the following proposition for later reference.

        \begin{proposition}[Properties of $\mathcal{S}$] Let $\mathcal{S}: \R \to \mathcal{P}(\R)$ fulfill \emph{S1)} and \emph{S2)}. We have the following:
\begin{enumerate}[label=\rm C\arabic*)]
		\item\label{it:C_seq} If $a_n \to a$ then any sequence $\mu_n \in \mathcal{S}(a_n)$ has a subsequence $\mu_{n_k}$ such that $\mu_{n_k} \to \mu \in \mathcal{S}(a)$;
		\item\label{it:C_epsDelta} For all $a\in \R$ and for each $\epsilon > 0$ there is a $\delta > 0$ such that for all $b\in\R$ with $|a-b| < \delta$ there are $\mu \in \mathcal{S}(a)$ and $\nu \in \mathcal{S}(b)$ with $|\mu-\nu| <\epsilon$;
\item\label{it:C_epsDelta2}  For all $a\in \R$ the set $\mathcal{S}(a)$ is
 compact.
	\end{enumerate}
\end{proposition}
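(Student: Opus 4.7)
The three assertions will all follow from combining the uniform bound \ref{it:S_bdd} on the range of $\mathcal{S}$ with the closed, maximal graph structure of $-\mathcal{S}$ encoded in \ref{it:S_mon}. My plan is to establish \ref{it:C_seq} first, directly from \ref{it:S_mon} together with the graph‑extension characterization of maximality, and then to deduce \ref{it:C_epsDelta2} and \ref{it:C_epsDelta} from \ref{it:C_seq} by short compactness arguments.

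For \ref{it:C_seq}, given $a_n \to a$ and $\mu_n \in \mathcal{S}(a_n)$, \ref{it:S_bdd} yields $|\mu_n| \le \mathcal{S}_{\mathrm{max}}$, so Bolzano--Weierstrass provides a convergent subsequence $\mu_{n_k} \to \mu$. To identify $\mu$ as an element of $\mathcal{S}(a)$ I would test the monotonicity of $-\mathcal{S}$ against the pair $(a_{n_k}, -\mu_{n_k})$ and an arbitrary $(b, -\nu)$ in the graph of $-\mathcal{S}$, which gives $(\mu_{n_k} - \nu)(a_{n_k} - b) \le 0$ for every $k$. Passing to the limit in $k$ one obtains $(\mu - \nu)(a - b) \le 0$ for every $(b,\nu)$ with $\nu \in \mathcal{S}(b)$, i.e.\ adjoining $(a, -\mu)$ to the graph of $-\mathcal{S}$ would preserve monotonicity. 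By the non-extendability clause of \ref{it:S_mon}, the point $(a, -\mu)$ must already lie in the graph, so $\mu \in \mathcal{S}(a)$.

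Property \ref{it:C_epsDelta2} then comes for free: for fixed $a \in \R$, $\mathcal{S}(a)$ is bounded by \ref{it:S_bdd} and closed by \ref{it:C_seq} applied to the constant sequence $a_n \equiv a$, hence compact in $\R$. For \ref{it:C_epsDelta} I would argue by contradiction: if it failed for some $a$ and some $\epsilon > 0$, there would exist a sequence $b_n \to a$ such that $|\mu - \nu| \ge \epsilon$ for every $\mu \in \mathcal{S}(a)$ and every $\nu \in \mathcal{S}(b_n)$. Choosing $\nu_n \in \mathcal{S}(b_n)$ arbitrarily, \ref{it:C_seq} yields a subsequence $\nu_{n_k} \to \nu_{\!*} \in \mathcal{S}(a)$, and selecting the admissible choice $\mu = \nu_{\!*}$ contradicts $\nu_{n_k} \to \nu_{\!*}$.

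The only step that requires genuine use of the hypotheses beyond the boundedness \ref{it:S_bdd} is the identification of the limit in \ref{it:C_seq}, which relies essentially on the non-extendability part of the maximal monotonicity \ref{it:S_mon}; this is the place where I expect the small but unavoidable bit of care. Once \ref{it:C_seq} is established, the remaining two assertions are routine and do not revisit \ref{it:S_mon}.
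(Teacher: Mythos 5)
Your proposal is correct. For C1) and C3) it is essentially the paper's own argument: extract a convergent subsequence via the uniform bound S2), pass to the limit in the monotonicity inequality $(\mu_{n_k}-\nu)(a_{n_k}-b)\le 0$, and invoke the non-extendability clause of S1) to conclude $\mu\in\mathcal{S}(a)$; closedness (hence compactness) of $\mathcal{S}(a)$ then follows by applying this to the constant sequence, exactly as in the paper. Where you diverge is C2): the paper proves it directly from maximality, by extracting a monotone subsequence $b_n\to a$ (say $b_n\nearrow a$) from the assumed counterexample and showing that $\sup_{r<a}\mathcal{S}(r)\le\inf\mathcal{S}(a)-\epsilon$ would create a gap $(\inf\mathcal{S}(a)-\epsilon,\inf\mathcal{S}(a))$ disjoint from the range, so that the graph of $-\mathcal{S}$ could be properly extended by a vertical segment at $a$, contradicting S1). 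You instead deduce C2) from C1): pick $\nu_n\in\mathcal{S}(b_n)$, extract $\nu_{n_k}\to\nu_*\in\mathcal{S}(a)$, and take $\mu=\nu_*$ to contradict the uniform separation $|\mu-\nu|\ge\epsilon$. Your route is shorter and avoids the case distinction on the monotone subsequence, at the price of making C2) logically dependent on C1); the paper's argument is self-contained for C2) and makes explicit the structural fact (no gaps in the range over the relevant interval) that maximal monotonicity provides on $\R$. Both arguments use only S1), S2) and the standing nonemptiness of $\mathcal{S}(b)$, so the difference is one of organization rather than substance.
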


\begin{proof} 
  Ad C1): Let $a_n \to a$, $\mu_n \in \mathcal{S}(a)$. Since the range of $\mathcal{S}$ is bounded by S2) we can extract a not relabeled subsequence $\mu_n$ such that $\mu_n \to \mu$. Take any $\hat \mu \in \mathcal{S}(\hat a)$ and compute
$$(\mu- \hat \mu)(a - \hat a) = \lim_{n\to \infty} (\mu_n- \hat \mu)(a_n - \hat a)\leq 0.$$
If $\mu \not \in \mathcal{S}(a)$, preserving monotonicity one could properly extend the graph of $-\mathcal{S}(a)$ by adding the point $(a,-\mu)$ to it. This contradicts the maximality of $-\mathcal{S}$.

Ad C2): Assume by contradiction that there exist $a\in \R$ and $\epsilon>0$ such that for all $\delta>0$ there exists $b\in \R$ with $|a-b|<\delta$ such that for all $\mu \in \mathcal{S}(a)$ and $\nu \in \mathcal{S}(b)$ one has 
\begin{equation}\label{mu}|\mu - \nu| \geq \epsilon.
\end{equation} 
Let $\delta =1/n$ and denote by $b_n\in \R$ a point fulfilling the above property. One can hence extract a monotone, not relabeled subsequence $b_n\to a$. Note that $b_n \not = a$ definitely, otherwise one could choose $\mu=\nu$ contradicting \eqref{mu}. Let us assume that $b_n \nearrow a$ (the case $b_n \searrow a$ is analogous). Then \eqref{mu} ensures that 
$$ \sup_{r<a}\mathcal{S}(r) \leq \inf \mathcal{S}(a)-\epsilon.$$
This in turn implies that $(\inf \mathcal{S}(a)-\epsilon, \inf \mathcal{S}(a))\cap \mathcal{S}(\R) = \emptyset$, contradicting the maximality of $-\mathcal S$. Indeed, one could then properly extend the graph of $-\mathcal S$ by adding the segment $\{a\} \times (-\inf \mathcal{S}(a),-\inf \mathcal{S}(a)+\epsilon)$ and preserving monotonicity.

Ad C3): As $\mathcal{S}(a)$ is bounded by S2), one has to check closedness. Assume $\mu_n \in \mathcal{S}(a)$ with $\mu_n \to \mu$. By arguing as in the proof of C1) we have that maximality $-\mathcal{S}$ implies that $\mu \in \mathcal{S}(a)$. 
\end{proof}
	
	The following arguments are an adaptation of the theory from \cite[Pages~22--24]{Crandall1992} to the specific form of equation \eqref{eq:pde}.	
	\begin{lemma}[USC envelopes of subsolutions]\label{lem:sup_subsolutions}
Let $\mathcal{F}$ be a non-empty set of subsolutions of \eqref{eq:pde} and define
		\[
		w(x, t) \coloneqq \sup_{u\in \mathcal{F}} u(x, t).
		\]
		If $w(x, t) < \infty$ then the upper-semicontinuous envelope $w^*$ of $w$, is also a subsolution of \eqref{eq:pde}.
	\end{lemma}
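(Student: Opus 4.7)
The plan is to run the standard Perron-method argument for USC envelopes of supremum families, the only new ingredient being property C1) of $\mathcal{S}$ which lets us pass to the limit in the set-valued inclusion.

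First, fix $(\hat x, \hat t) \in \Omega \times I$ and any $(a, p, X) \in \mathcal{P}^{2,+} w^*(\hat x, \hat t)$. We need to exhibit some $\mu \in \mathcal{S}(a)$ such that
\[
a + F(\hat x, \hat t, w^*(\hat x, \hat t), p, X) \le \mu\, G(\hat x, \hat t, w^*(\hat x, \hat t), p).
\]
To produce it, I would invoke the classical jet-approximation result (\cite[Proposition~4.3]{Crandall1992}) applied to the supremum family: it provides sequences $u_n \in \mathcal{F}$, $(x_n, t_n) \in \Omega \times I$, and $(a_n, p_n, X_n) \in \mathcal{P}^{2,+} u_n(x_n, t_n)$ satisfying
\[
(x_n, t_n, u_n(x_n, t_n), a_n, p_n, X_n) \to (\hat x, \hat t, w^*(\hat x, \hat t), a, p, X).
\]
Since each $u_n$ is a subsolution in the sense of Definition \ref{def:vis_sol}, there exist $\mu_n \in \mathcal{S}(a_n)$ with
\[
a_n + F(x_n, t_n, u_n(x_n, t_n), p_n, X_n) \le \mu_n\, G(x_n, t_n, u_n(x_n, t_n), p_n).
\]

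The key step is the passage to the limit. Since $a_n \to a$, property C1) yields a (not relabeled) subsequence with $\mu_n \to \mu \in \mathcal{S}(a)$. Combining this with the continuity of $F$ and $G$ and all of the above convergences, taking $n \to \infty$ in the previous inequality produces exactly the required bound, so that $w^*$ is a viscosity subsolution in the sense of Definition \ref{def:vis_sol}.

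I expect no serious obstacle here: the genuinely new element compared to the classical single-valued Perron argument is the closure property for $\mathcal{S}$ along converging $a_n$, and this is precisely what C1) delivers from the maximal monotonicity S1) and the boundedness S2). In particular the selections $\mu_n$ are automatically bounded by $\mathcal{S}_{\max}$, so no compactness issue arises, and the approximating jets behave just as in the standard theory, making the argument essentially a verbatim adaptation of the classical case modulo the use of C1).
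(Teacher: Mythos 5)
Your proposal is correct and follows essentially the same route as the paper's own proof: approximate the jet of $w^*$ via \cite[Proposition~4.3]{Crandall1992} applied to the supremum family, use the subsolution property of each $u_n$ to select $\mu_n \in \mathcal{S}(a_n)$, and pass to the limit via property C1) together with the continuity of $F$ and $G$. No gap to report.
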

	\begin{proof}
		Let $(x, t) \in \Omega \times I$ and $(a, p, X) \in \mathcal{P}^{2, +}w^*(x, t)$. By definition of the upper-semicontinuous envelope one can find a sequence $(x_n, t_n, u_n) \in \Omega \times I \times \mathcal{F}$ such that $u_n(x_n, t_n) \to w^*(x, t)$ and for any other sequence $(x'_n, t'_n) \to (x, t)$ we have $\limsup_{n\to \infty} u_n(x'_n, t'_n) \le w^*(x, t)$. This implies that there is a sequence $(\hat x_n, \hat t_n) \in \Omega \times I$ and a sequence $(a_n, p_n, X_n) \in \mathcal{P}^{2, +} u_n(\hat x_n,\hat t_n)$ such that
		\[
			(\hat x_n, \hat t_n, u_n(\hat x_n, \hat t_n), a_n, p_n, X_n) \to (x, t, w^*(x, t), a, p, X),
		\]
		see \cite[Proposition~4.3]{Crandall1992}. As $u_n \in \mathcal{F}$ is a subsolution there is a $\mu_n \in \mathcal{S}(a_n)$ such that
		\[
			a_n + F(\hat x_n, \hat t_n, u(\hat x_n, \hat t_n), p_n, X_n) \le \mu_n G(\hat x_n, \hat t_n, u(\hat x_n, \hat t_n), p_n).
		\]
		Passing to a subsequence, by \ref{it:C_seq} we have $\mu_{n_k} \to \mu \in \mathcal{S}(a)$ and the last inequality still holds for this subsequence. Letting $k\to \infty$ implies that
		\[
			a + F(x,t, w^*(x, t), p, X) \le \mu G(x, t, w^*(x, t), p).
		\]
		We proved that for all $(x, t) \in \Omega \times I$ and all $(a, p, X) \in \mathcal{P}^{2, +} w^*(x, t)$ there is a $\mu \in \mathcal{S}(a)$ such that the above inequality holds. This shows that $w^*$ is a subsolution of \eqref{eq:pde}.
	\end{proof}
	
	\begin{lemma}\label{lem:bump_const}
Let $u$ be a subsolution of \eqref{eq:pde}. Assume that $u_*$ is not a supersolution at some point $(\hat x, \hat t)$, i.e., there exist $(a, p, X) \in \mathcal{P}^{2, -} u_*(\hat x, \hat t)$ such that for all $\mu \in \mathcal{S}(a)$ we have
		\begin{equation}
		a + F(\hat x, \hat t, u_*(\hat x, \hat t), p, X) < \mu G(\hat x, \hat t, u_*(\hat x, \hat t), p). \label{eq:strict_ineq}
		\end{equation}
		In this case, for any $\epsilon > 0$ there exists a subsolution $u_\epsilon: \Omega \times I \to \R$ satisfying
		\begin{itemize}
			\item $u_\epsilon(x, t) \ge u(x, t)$,
			\item $\sup(u_\epsilon - u) > 0$,
			\item $u_\epsilon(x, t) = u(x, t)$ for all $(x, t)\in \Omega\times I$ with $|(x, t)-(\hat{x}, \hat{t})|\ge\epsilon$.
		\end{itemize}
	\end{lemma}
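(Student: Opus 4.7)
The plan is to run the classical Perron bump construction, with the extra twist that the subsolution inequality has the multivalued right-hand side $\mathcal{S}(w_t)\,G$. Concretely I would build a smooth function $w$ that is a local classical subsolution on a small parabolic ball $B_r(\hat x, \hat t)$, strictly dominates $u$ on sequences approaching $(\hat x, \hat t)$, and is dominated by $u$ on an annulus around $(\hat x, \hat t)$; then I set $u_\epsilon \coloneqq \max\{u,w\}$ inside the ball (with $w$ extended by $-\infty$ outside) and $u_\epsilon \coloneqq u$ elsewhere. The first preparatory step is to turn the multivalued strict inequality \eqref{eq:strict_ineq} into a scalar gap: by \ref{it:C_epsDelta2}, $\mathcal{S}(a)$ is compact, so it admits a maximum $\bar\mu$, and evaluating \eqref{eq:strict_ineq} at $\mu = \bar\mu$ produces some $\eta > 0$ with
\[
a + F(\hat x, \hat t, u_*(\hat x, \hat t), p, X) + 3\eta \le \bar\mu\, G(\hat x, \hat t, u_*(\hat x, \hat t), p).
\]

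For parameters $\delta, \kappa > 0$ to be tuned, I set
\[
w(x,t) \coloneqq u_*(\hat x, \hat t) + \delta + a(t-\hat t) + p\cdot(x-\hat x) + \tfrac12\langle X(x-\hat x),(x-\hat x)\rangle - \kappa\bigl(|x-\hat x|^2 + (t-\hat t)^2\bigr),
\]
so $w_t(\hat x, \hat t) = a$, $\nabla w(\hat x, \hat t) = p$, $D^2 w(\hat x, \hat t) = X - 2\kappa\operatorname{Id}$, while $w_t(x,t) = a - 2\kappa(t-\hat t)$ is continuous. Using the approximation property \ref{it:C_epsDelta}, on a sufficiently small ball I can select $\mu(x,t) \in \mathcal{S}(w_t(x,t))$ within any prescribed distance of $\bar\mu$. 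Combining this with continuity of $F$ and $G$, the Lipschitz estimate \ref{it:G_Lipschitz} in the state variable, non-negativity of $G$, and the degenerate ellipticity of $F$ (immediate for the concrete $F(x,t,r,p,X) = -\tr(X) - f(x,t)$ of \eqref{eq:main}), the gap $3\eta$ absorbs all perturbations once $\delta$, $\kappa$, and $r$ are small enough. This establishes that $w$ is a classical, hence viscosity, subsolution of \eqref{eq:pde} on $B_r(\hat x, \hat t)$.

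For the annulus estimate, $(a,p,X) \in \mathcal{P}^{2,-} u_*(\hat x, \hat t)$ gives
\[
u(x,t) \ge u_*(x,t) \ge u_*(\hat x, \hat t) + a(t-\hat t) + p\cdot(x-\hat x) + \tfrac12\langle X(x-\hat x),(x-\hat x)\rangle + o\bigl(|t-\hat t| + |x-\hat x|^2\bigr),
\]
so $u-w \ge -\delta + \kappa(|x-\hat x|^2 + (t-\hat t)^2) - o(|t-\hat t|+|x-\hat x|^2)$ as $(x,t) \to (\hat x, \hat t)$. Fixing $r \le \epsilon$ small enough and then choosing $\delta$ much smaller than $\kappa r^2$ forces $w < u$ on the shell $\{r/2 \le |(x,t)-(\hat x,\hat t)| \le r\}$. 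Defining $u_\epsilon \coloneqq \max\{u, w\}$ globally (with $w \equiv -\infty$ off $B_r$) then yields a USC function equal to $u$ outside $B_\epsilon(\hat x, \hat t)$ and $\ge u$ everywhere; Lemma~\ref{lem:sup_subsolutions} applied to the two-element family $\{u, w\}$ shows that $u_\epsilon$ is a viscosity subsolution of \eqref{eq:pde}. Finally, by definition of the lower semicontinuous envelope there is a sequence $(x_n, t_n) \to (\hat x, \hat t)$ with $u(x_n, t_n) \to u_*(\hat x, \hat t)$ and $w(x_n, t_n) \to u_*(\hat x, \hat t) + \delta$, so $u_\epsilon(x_n, t_n) - u(x_n, t_n) \to \delta > 0$ and in particular $\sup(u_\epsilon - u) > 0$.

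The main obstacle is the local subsolution check for $w$: the three small parameters $\delta, \kappa, r$ must be balanced against the gap $\eta$, and the subtlety absent from the classical single-valued Perron argument is that one must produce a selection $\mu(x,t) \in \mathcal{S}(w_t(x,t))$ that is uniformly close to $\bar\mu$, which is precisely what the approximation property \ref{it:C_epsDelta} delivers; the compactness property \ref{it:C_epsDelta2} is then what guarantees that the scalar gap $\eta$ used at the outset can be extracted in the first place.
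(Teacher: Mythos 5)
Your overall plan---building the parabolic paraboloid bump, verifying it is a local classical subsolution, showing it is dominated by $u$ on an annulus, taking the pointwise max, and invoking Lemma~\ref{lem:sup_subsolutions}---matches the paper's argument. The gap is in the single nontrivial new step, namely the subsolution check for $w$ in the presence of the multivalued $\mathcal{S}$.

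You establish the scalar gap only at the single element $\bar\mu = \max\mathcal{S}(a)$, and then assert that \ref{it:C_epsDelta} lets you ``select $\mu(x,t)\in\mathcal{S}(w_t(x,t))$ within any prescribed distance of $\bar\mu$.'' That is not what \ref{it:C_epsDelta} says: it produces \emph{some} pair $\mu\in\mathcal{S}(a)$, $\nu\in\mathcal{S}(b)$ that are close, with no control over which element $\mu$ of $\mathcal{S}(a)$ is matched. For the model operator $\mathcal{S}=-\partial R$, $R=|\cdot|$, take $a=0$: then $\mathcal{S}(0)=[-1,1]$ and $\bar\mu=1$, while for every $b>0$, however small, $\mathcal{S}(b)=\{-1\}$, at distance $2$ from $\bar\mu$. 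On the half-ball $t<\hat t$ you have $w_t(x,t)>a$, so by monotonicity \ref{it:S_mon} the only admissible selections satisfy $\nu\le\inf\mathcal{S}(a)$; your gap $a+F+3\eta\le\bar\mu G$ (i.e.\ $\le G$ here) does not imply the inequality $w_t+F\le -G$ you would need whenever $G>0$. So the claimed approximation of $\bar\mu$ fails precisely where you need it.

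The fix---and what the paper does---is to use compactness of $\mathcal{S}(a)$ (your \ref{it:C_epsDelta2}) together with continuity of $\mu\mapsto a+F-\mu G$ to get a \emph{uniform} margin: there is $\alpha>0$ with $a+F(\hat x,\hat t,u_*,p,X)-\mu G(\hat x,\hat t,u_*,p)\le-\alpha$ for \emph{every} $\mu\in\mathcal{S}(a)$. Then \ref{it:C_epsDelta}, applied with $b=w_t(x,t)\to a$, hands you some $\mu'\in\mathcal{S}(a)$ and $\nu\in\mathcal{S}(w_t(x,t))$ with $|\mu'-\nu|$ small, and the uniform $\alpha$ absorbs both $(\mu'-\nu)G$ and the $o(1)$ perturbations from continuity of $F$ and $G$. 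Once you replace your pointwise gap at $\bar\mu$ by this uniform one, the rest of your argument (annulus estimate, maximum, strict positivity at a sequence realizing $u_*(\hat x,\hat t)$) goes through and is essentially the paper's proof. A minor additional remark: the appeal to degenerate ellipticity is not needed and in fact points the wrong way for $D^2 w=X-2\kappa\operatorname{Id}\le X$; continuity of $F$, together with the margin $\alpha$, is what controls this perturbation, exactly as in the paper.
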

	\begin{proof}
		Let $(\hat{x}, \hat{t})$ and $(a, p, X) \in \mathcal{P}^{2, -} u(\hat{x}, \hat{t})$ be such that inequality \eqref{eq:strict_ineq} holds. As $\mathcal{S}(a)$ is compact, see \ref{it:C_epsDelta2}, one can find $\alpha > 0$ such that
		\[
		a + F(\hat x, \hat t, u_*(\hat x, \hat t), p, X) - \mu G(\hat x, \hat t, u_*(\hat x, \hat t), p) \le -\alpha
		\]
		for all $\mu \in \mathcal{S}(a)$. Let us define
		\[
		u_{\delta, \gamma}(x, t) \coloneqq u_*(\hat x, \hat t) + \delta + a(t-\hat{t})+ \left<p, x-\hat x\right>+ \tfrac{1}{2}\left<X(x-\hat x), x-\hat x\right> - \tfrac{\gamma}{2} |(x, t) - (\hat{x}, \hat{t})|^2.
		\]
		As $F$ is continuous we have for $|(x, t) - (\hat{x}, \hat{t})| \to 0$,
		\begin{align*}
		&F( x,  t, u_{\delta, \gamma}( x, t), \nabla u_{\delta, \gamma}(x, t), D^2 u_{\delta,\gamma}(x, t)) \\
&\quad = F( x,  t, u_{\delta, \gamma}( x, t), p - \gamma (x-\hat{x}), X - \gamma \operatorname{Id}) \\
		&\quad = F(\hat x, \hat t, u_*(\hat x, \hat t), p, X) + {\rm{o}}(1).
		\end{align*}
		Analogously, for $G$ we have 
		$$
		G( x,  t, u_{\delta, \gamma}( x, t), \nabla u_{\delta, \gamma}(x, t))= G(\hat x, \hat t, u_*(\hat x, \hat t), p) + {\rm{o}}(1).
		$$
		As $\partial_t u_{\delta, \gamma}( x, t) = a -\gamma (t-\hat{t})$, condition \ref{it:C_epsDelta} implies that there are $\mu_\gamma \in \mathcal{S}(\partial_t u_{\delta, \gamma}( x, t))$ with $\gamma$ small enough such that $\mu_\gamma = \mu + o(1)$. Hence we conclude that if $\delta, \gamma, r$ are small enough then is $u_{\delta, \gamma}$ a subsolution of \eqref{eq:pde} in $B_r(\hat{x}, \hat{t})$. Moreover, since
		\begin{align*}
			u(x, t) &\ge u_*(x, t) \ge u_*(\hat{x}, \hat{t})+ \left<a, t-\hat t\right>+ \left<p, x-\hat x\right>\\
&\quad+ \tfrac{1}{2}\left<X(x-\hat x), x-\hat x\right> + {\rm{o}}(|(x, t)-(\hat x, \hat t)|^2),
		\end{align*}
we can choose $\delta = c(\gamma, r)$ to obtain $u(x, t) > u_{\delta, \gamma}(x, t)$ for $(x, t) \in B_r(\hat{x}, \hat{t}) \setminus B_{\frac{r}{2}}(\hat{x}, \hat{t})$. Therefore the function
		\[
			u_{\gamma}(x, t) \coloneqq \left\{\begin{array}{ll}
			\max\{ u(x, t), u_{\delta, \gamma}(x, t) \} & \text{ in } B_r(\hat{x}, 	\hat{t}), \\
			u(x, t) & \text{ elsewhere,}
			\end{array}\right.
		\]
		is a subsolution by Lemma \ref{lem:sup_subsolutions}. It is clear that $u_\gamma(x, t) \ge u(x, t)$ and that in a neighborhood of $(\hat{x}, \hat{t})$ we have $u_\gamma(x, t) > u(x, t)$. For $\epsilon$ given, by choosing $r, \gamma < \epsilon$ we have that $u_{\gamma}$ satisfies all the required properties.
	\end{proof}
	
	\begin{theorem}[Perron's Method]\label{thm:perron}
		Assume that comparison holds for \eqref{eq:pde}, let $u$ be a subsolution and $v$ be a supersolution of \eqref{eq:pde} with $u_* \le v^*$ on $\partial_P(\Omega \times I)$. Then the function
		\[
			W(x, t) \coloneqq \sup\{ w(x, t) \;|\; w \text{ is a subsolution with } u \le w \le v \}
		\]
		is a viscosity solution of \eqref{eq:pde} with $u_* \le W \le v^*$ on $\partial_P(\Omega \times I)$.
	\end{theorem}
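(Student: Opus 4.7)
The proof will proceed by the classical Ishii--Perron method, leveraging Lemmas \ref{lem:sup_subsolutions} and \ref{lem:bump_const}. The assumption of comparison is used only to ensure that the admissible class of test subsolutions is non-empty; the construction of the solution itself is then almost entirely structural.

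I begin by noting that the hypothesis $u_* \leq v^*$ on $\partial_P(\Omega \times I)$ together with the assumed comparison principle gives $u \leq v$ throughout $\Omega \times I$. Hence the family
\[
\mathcal{F} := \{\, w \,:\, w \text{ is a viscosity subsolution of } \eqref{eq:pde} \text{ with } u \leq w \leq v \,\}
\]
is non-empty, and $W = \sup_{w \in \mathcal{F}} w$ is well-defined and satisfies $u \leq W \leq v$; in particular the boundary inequality $u_* \leq W \leq v^*$ on $\partial_P(\Omega \times I)$ is inherited. The assertion that $W^*$ is a subsolution of \eqref{eq:pde} is then an immediate application of Lemma \ref{lem:sup_subsolutions} to the family $\mathcal{F}$, since $W \leq v < \infty$.

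The substantive step is to show that $W_*$ is a supersolution. I argue by contradiction: assume there exist $(\hat x, \hat t) \in \Omega \times I$ and $(a, p, X) \in \mathcal{P}^{2,-} W_*(\hat x, \hat t)$ for which the strict inequality \eqref{eq:strict_ineq} holds for every $\mu \in \mathcal{S}(a)$. Applying Lemma \ref{lem:bump_const} to the subsolution $W^*$, for every $\epsilon > 0$ I obtain a subsolution $u_\epsilon \geq W^*$ with $\sup(u_\epsilon - W^*) > 0$ and $u_\epsilon \equiv W^*$ outside $B_\epsilon(\hat x, \hat t)$. The crucial admissibility check is $u_\epsilon \leq v$: because $v$ is itself a supersolution of \eqref{eq:pde}, the strict violation in \eqref{eq:strict_ineq} cannot hold with $v(\hat x, \hat t)$ in place of $W_*(\hat x, \hat t)$, so there is a strict positive gap $v(\hat x, \hat t) - W_*(\hat x, \hat t) > 0$. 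Combining this gap with the lower semicontinuity of $v$ and the controlled (essentially quadratic) size of the local perturbation built inside the proof of Lemma \ref{lem:bump_const}, a sufficiently small $\epsilon$ guarantees $u_\epsilon \leq v$ on $B_\epsilon(\hat x, \hat t)$, hence everywhere. Then $u_\epsilon \in \mathcal{F}$, while $u_\epsilon > W$ at some point of $B_\epsilon(\hat x, \hat t)$, contradicting the maximality of $W$.

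The main obstacle I expect is precisely this last admissibility argument: one must simultaneously exploit the strict supersolution inequality satisfied by $v$ at the failure point and the precise quadratic form of the bump from Lemma \ref{lem:bump_const} (itself relying on the continuity properties C1)--C3) of the set-valued map $\mathcal{S}$) in order to keep the perturbed subsolution below $v$ while still strictly above $W$ somewhere. Once this contradiction is in place, the conclusion that $W$ is a viscosity solution, in the sense of Definition \ref{def:vis_sol} read through its upper and lower semicontinuous envelopes, together with the claimed boundary inequality, is immediate.
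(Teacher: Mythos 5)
Your skeleton coincides with the paper's: Lemma \ref{lem:sup_subsolutions} gives that $W^*$ is a subsolution, and Lemma \ref{lem:bump_const} combined with the maximality of $W$ excludes a failure of the supersolution property. Where you genuinely diverge is the admissibility check $u_\epsilon \le v$: you argue pointwise, in the classical Crandall--Ishii--Lions fashion, that the failure point must satisfy $W_*(\hat x,\hat t) < v(\hat x,\hat t)$ and then use the lower semicontinuity of $v$ together with the smallness of the bump, whereas the paper simply observes that the bump is supported near an interior point, so $W_\epsilon \le v$ on $\partial_P(\Omega\times I)$ for small $\epsilon$, and invokes the assumed comparison principle once more to get $W_\epsilon \le v$ everywhere. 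Your route is viable, but the step ``the strict violation cannot hold with $v(\hat x,\hat t)$ in place of $W_*(\hat x,\hat t)$'' must be made precise: one uses $W_* \le v$ and equality of the values at $(\hat x,\hat t)$ to conclude $\mathcal{P}^{2,-}W_*(\hat x,\hat t) \subseteq \mathcal{P}^{2,-}v(\hat x,\hat t)$, and only then does the supersolution property of $v$ at the jet $(a,p,X)$ contradict \eqref{eq:strict_ineq}, forcing the strict gap.

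There are, however, two genuine loose ends. First, Lemma \ref{lem:bump_const} concerns a subsolution whose own lower envelope fails to be a supersolution; you apply it to the subsolution $W^*$ while assuming the failure for $W_*$. A priori $W \le W^*$, hence $W_* \le (W^*)_*$, and these envelopes need not coincide, so the lemma does not directly apply. The paper removes this mismatch before invoking the lemma by proving $W = W^*$: by comparison the subsolution $W^*$ satisfies $W^* \le v$, so $W^*$ belongs to the admissible family and therefore $W^* \le W$. Second, the theorem asserts that $W$ itself is a viscosity solution in the sense of Definition \ref{def:vis_sol}, which requires $W$ to be both upper and lower semicontinuous; knowing that $W^*$ is a subsolution and $W_*$ is a supersolution is not yet that statement, and your closing remark that the conclusion is ``immediate'' when read through envelopes skips this. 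The paper closes it with a further application of comparison (subsolution $W = W^*$ against supersolution $W_*$) to get $W \le W_*$, hence $W_* = W = W^*$. Both repairs rest on the comparison principle, so your opening claim that comparison is needed only to guarantee non-emptiness of the admissible family is inaccurate: it is also what identifies $W$ with its envelopes (and, in the paper's version, what yields the admissibility of the bumped function).
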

	\begin{proof}
		The standard theory of upper- and lower-semicontinuous envelopes provides
		\[
			u_* \le W_* \le W \le W^* \le v^*
		\]
		which in particular entails that $u_* \le W  \le v^*$ on $\partial_P(\Omega \times I)$. We use Lemma \ref{lem:sup_subsolutions} to conclude that $W^*$ is a also a subsolution and hence $W^* \le v$ by comparison. This however implies that $W = W^*$ and in particular $W$ is a subsolution. 
		
		Now assume that $W_*$ fails to be a supersolution at some point $(\hat{x}, \hat{t})$. Then Lemma \ref{lem:bump_const} provides us with subsolutions $W_\epsilon$ that satisfy $W \le W_\epsilon$ and $u \le W_\epsilon \le v$ on $\partial_P(\Omega \times I)$ by choosing $\epsilon$ small enough. Moreover, we have $u \le W_\epsilon$ and by comparison also $W_\epsilon \le v$. By the maximality of $W$ this implies that $W \ge W_{\epsilon}$. Still, there are points where $W_\epsilon > W$ which is a contradiction. Therefore, $W_*$ is a supersolution and by comparison we have $W \le W_*$ and hence $W_* = W^* = W$. We conclude that $W$ is a viscosity solution.
	\end{proof}
	\subsection{Continuous dependence and regularity on the torus}
	In this section we will prove continuous dependence and some regularity of viscosity solutions on the flat torus. In the following, we say that $u$ {\it is a viscosity subsolution (supersolution) of equation} $E(F_1, \mathcal{S}_1, G_1)$ if $u$ is a subsolution (supersolution) to \eqref{eq:pde} with $F= F_1$, $\mathcal{S} = \mathcal{S}_1$, and $G = G_1$.
	 
Let $L_{G_1}, L_{G_2}$ be the Lipschitz-constants from \ref{it:G_Lipschitz} corresponding to equation $E_1$ and $E_2$ respectively, $\mathcal{S}_\mathrm{max}$ the supremum of $\mathcal{S}$ as in \ref{it:S_bdd}. We will call
		\[
		\lambda(E_1, E_2) \coloneqq \mathcal{S}_\mathrm{max} \min\{  L_{G_1}, L_{G_2} \}
		\]
		the minimal exponential scaling factor of these two equations and the scaled difference between the data is
		\[
		\Lambda(E_1, E_2) \coloneqq \frac{||F_1 - F_2||_\infty+\mathcal{S}_\mathrm{max}||G_1 - G_2||_\infty}{\lambda(E_1, E_2)}.
		\]
	
	\begin{theorem}[Continuous dependence on data]\label{thm:cont_dep}

		Let $u$ be a subsolution of $E_1 = E(F_1, \mathcal{S}, G_1)$ and $v$ a supersolution of $E_2 = E(F_2, \mathcal{S}, G_2)$. Assume that a comparison principle and \emph{\ref{it:F_inc}} hold for $E_2$. Moreover, let \emph{\ref{it:S_bdd}}, \emph{\ref{it:S_mon}}, and \emph{\ref{it:G_Lipschitz}} be satisfied by both equations. We then have for all $(x, t) \in \Omega \times I$
		\begin{align*}
		u(x, t)-v(x, t) \le&~ e^{\lambda(E_1, E_2) t} \left(\max_{(x, t) \in \partial_P ( \Omega \times I )}\{ (u(x, t) - v(x, t))^+ \} +\Lambda(E_1, E_2) \right) \\&- \Lambda(E_1, E_2).
		\end{align*}
	\end{theorem}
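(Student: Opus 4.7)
The plan is to reduce the estimate to the comparison principle for $E_2$ by correcting the subsolution $u$ with a scalar function of time that absorbs both the initial-boundary gap and the discrepancy between the data. Write $C := \max_{\partial_P(\Omega \times I)}(u-v)^+$, $\lambda := \lambda(E_1, E_2)$, $\Lambda := \Lambda(E_1, E_2)$, and set
\[
h(t) := e^{\lambda t}(C + \Lambda) - \Lambda, \qquad \tilde{u}(x,t) := u(x,t) - h(t).
\]
Then $h$ is non-negative and non-decreasing, it satisfies the key identity $h'(t) = \lambda h(t) + \lambda\Lambda$ with $h(0) = C$, and the choice of $C$ ensures $\tilde{u} \le v$ on the whole parabolic boundary. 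Thus, once I show that $\tilde{u}$ is a viscosity subsolution of $E_2$, the assumed comparison principle gives $\tilde{u} \le v$ in $\Omega \times I$, which, rewritten, is precisely the claimed inequality.

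To verify the subsolution property of $\tilde{u}$, I take $(b,p,X) \in \mathcal{P}^{2,+}\tilde{u}(x,t)$; since $u = \tilde{u} + h$ with $h$ smooth, one has $(b+h'(t), p, X) \in \mathcal{P}^{2,+} u(x,t)$, and the subsolution property of $u$ for $E_1$ yields some $\mu \in \mathcal{S}(b+h'(t))$ with $b + h'(t) + F_1(x,t,u,p,X) \le \mu\, G_1(x,t,u,p)$. I then pick any $\mu' \in \mathcal{S}(b)$; because $h'(t) > 0$, the monotonicity of $-\mathcal{S}$ in \ref{it:S_mon} forces $\mu \le \mu'$. Since $\tilde{u} \le u$, hypothesis \ref{it:F_inc} for $E_2$ gives $F_2(x,t,\tilde{u},p,X) - F_1(x,t,u,p,X) \le \|F_1 - F_2\|_\infty$.

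The main technical step, and the origin of the minimum $\min\{L_{G_1}, L_{G_2}\}$ in $\lambda(E_1, E_2)$, is the rearrangement of the $G$ terms. Writing
\[
\mu G_1(x,t,u,p) - \mu' G_2(x,t,\tilde{u},p) = \mu\bigl[G_1 - G_2\bigr](x,t,u,p) + \mu\bigl[G_2(x,t,u,p) - G_2(x,t,\tilde{u},p)\bigr] + (\mu - \mu')G_2(x,t,\tilde{u},p)
\]
and using $|u-\tilde{u}| = h(t)$, $\mu - \mu' \le 0$, and $G_2 \ge 0$ produces the bound $\mathcal{S}_\mathrm{max}\|G_1 - G_2\|_\infty + \mathcal{S}_\mathrm{max}L_{G_2}h(t)$; the symmetric split through $G_1(x,t,\tilde{u},p)$ gives the analogous bound with $L_{G_1}$ in place of $L_{G_2}$, so whichever is smaller can be used. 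Combining everything, the identity $h'(t) = \lambda h(t) + \lambda\Lambda$ exactly cancels the right-hand side, showing that $\tilde{u}$ satisfies the subsolution inequality for $E_2$ with the chosen $\mu' \in \mathcal{S}(b)$. Comparison for $E_2$ then concludes the proof. The main obstacle is this algebraic bookkeeping: one must pick $\mu'$ after $\mu$ so that the monotonicity of $-\mathcal{S}$ makes $(\mu-\mu')G_2$ work in our favor, and the exponential form of $h$ is precisely the choice that makes the data-error constant $\lambda\Lambda$ and the Lipschitz term $\lambda h(t)$ balance the $h'(t)$ produced by the jet shift.
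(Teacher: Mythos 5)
Your proof is correct and follows essentially the same approach as the paper: define the time-dependent shift $h(t)=e^{\lambda(E_1,E_2)t}(C+\Lambda(E_1,E_2))-\Lambda(E_1,E_2)$, verify that $\tilde{u}=u-h$ is a subsolution of $E_2$ using the identity $h'=\lambda h+\lambda\Lambda$, the monotonicity \ref{it:S_mon} to relate $\mu\in\mathcal{S}(b+h')$ and $\mu'\in\mathcal{S}(b)$, the monotonicity \ref{it:F_inc} for $F_2$, and the Lipschitz bound \ref{it:G_Lipschitz}, and then conclude by comparison. The only substantive difference is presentational: you carry out the subsolution verification rigorously through jets and make explicit the two ways of splitting the $G$ terms that yield $\min\{L_{G_1},L_{G_2}\}$, whereas the paper performs a formal computation with $u_t$ and leaves that choice implicit.
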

	\begin{proof}
		Set $$\Lambda(t) \coloneqq e^{\lambda(E_1, E_2) t} \left(\max_{(x, t) \in \partial_P ( \Omega \times I )}\{ (u(x, t) - v(x, t))^+ \}  +\Lambda(E_1, E_2) \right) - \Lambda(E_1, E_2)$$ and define $\tilde{u} \coloneqq u(x, t) - \Lambda(t)$. We will show that $\tilde{u}$ is also a subsolution to $E_2$. In order to check this, we formally compute 
		\begin{align*}
		\tilde{u}_t + F_2(x, t, \tilde{u}, \nabla \tilde{u}, D^2 \tilde{u}) 
		&= u_t - \Lambda'(t) + F_2(x, t, u - \Lambda(t), \nabla u, D^2 u) \\
		&\le u_t - \Lambda'(t) + F_2(x, t, u, \nabla u, D^2 u)\\
		&\le u_t - \Lambda'(t) + F_1(x, t, u, \nabla u, D^2 u) + ||F_1-F_2||_\infty,
		\end{align*}
		where we used \ref{it:F_inc} in the first inequality. Note that, as $u$ is a viscosity subsolution, there exists $\mu \in \mathcal{S}(u_t)$ such that
		\begin{align*}
		\tilde{u}_t + F_2(x, t, \tilde{u}, \nabla \tilde{u}, D^2 \tilde{u}) 
		&\le \mu G_1(x, t, u, \nabla u) - \Lambda'(t) + ||F_1-F_2||_\infty .
		\end{align*}
		Using \ref{it:S_mon}, we find a $\tilde{\mu} \in \mathcal{S}(\tilde{u}_t)$ with $\tilde{\mu} \ge \mu$. Hence, the positivity of $G$, \ref{it:G_Lipschitz}, and \ref{it:S_bdd} give
		\begin{align*}
		&\tilde{u}_t + F_2(x, t, \tilde{u}, \nabla \tilde{u}, D^2 \tilde{u}) \\
		&\le \tilde{\mu} G_1(x, t, u, \nabla u) - \Lambda'(t)+ ||F_1-F_2||_\infty \\
		&\le \tilde{\mu} G_2(x, t, \tilde{u}, \nabla \tilde{u}) + \lambda(E_1, E_2) \Lambda(t) + \mathcal{S}_\mathrm{max}||G_1 - G_2||_\infty - \Lambda'(t)+ ||F_1-F_2||_\infty \\
		&= \tilde{\mu} G_2(x, t, \tilde{u}, \nabla \tilde{u}).
		\end{align*}
		This shows that $\tilde{u}$ is a viscosity subsolution of $E(F_2, \mathcal{S}, G_2)$. Moreover, we have for $(x, t) \in \partial_P (\Omega \times I)$ that
		\[
		\tilde{u}(x, t) \le u(x, t) - \max_{(x, t) \in \partial_P ( \Omega \times I )}\{ (u(x, t) - v(x, t))^+ \}\le v(x, t)
		\]
		which allows us to use the comparison principle to conclude that $\tilde{u} \le v$ on $\Omega \times I$, and the statement follows.
	\end{proof}

	\begin{lemma}[H\"older-Continuity on the Torus]\label{lem:hold_cont}
		Let $u$ be a viscosity solution to \eqref{eq:pde}, i.e., to $E = E(F, \mathcal{S}, G)$, on $\mathbb{T}^n \times I$. Assume that comparison holds and \emph{\ref{it:F_inc}}, \emph{\ref{it:S_bdd}}, \emph{\ref{it:S_mon}}, and \emph{\ref{it:G_Lipschitz}} are satisfied. Moreover, assume that $F$ and $G$ are $\alpha$-H\"older-continuous for $\alpha \in (0,1] $ in the $(x, t)$-variables, i.e., there are $H_F, H_G > 0$ such that
		\begin{align*}
		| F(x, t, r, p, X) - F(y, s, r, p, X) | &\le H_F (|x-y|^\alpha + |t-s|^\alpha), \\
		| G(x, t, r, p) - G(y, s, r, p) | &\le H_G (|x-y|^\alpha + |t-s|^\alpha),
		\end{align*}
		for all $x, y \in \mathbb{T}^n$, $s, t \in [0, T)$, $r \in\R$, $p\in \Rn$, $X\in \operatorname{Sym}(n)$.
		If $u(\cdot, 0)$ is $\alpha$-H\"older-continuous with constant $H_0 > 0$ and $u$ has $\alpha$-growth, i.e., there exists $M > 0$ such that for all $(x, t) \in \mathbb{T}^n \times I$, we have
		\[
		|u(x, t)- u(x, 0)| \le M t^\alpha,
		\]
		then $u$ is $\alpha$-H\"older-continuous, i.e., for all $x, y\in \mathbb{T}^n$, $t, s \in I$ it holds
		\[
		|u(x, t) - u(y, s)| \le H_u (|x-y|^\alpha + |t-s|^\alpha),
		\]
		with $H_u$ depending on $ H_F, H_G, H_0, \mathcal{S}_\mathrm{max}, L_G$, and $ M$.
	\end{lemma}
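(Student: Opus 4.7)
The plan is to exploit translation invariance on the torus by comparing the solution $u$ with its shift $u(\cdot + h, \cdot + \tau)$, and then to invoke the continuous dependence result (Theorem~\ref{thm:cont_dep}) together with the estimates on the initial data and on the difference of the shifted coefficients. Fix $x, y \in \mathbb{T}^n$ and $s, t \in I$; by symmetry we may assume $s = t + \tau$ with $\tau \ge 0$, and set $h = y - x \in \mathbb{T}^n$. Define
\[
v(x, t) \coloneqq u(x + h, t + \tau) \quad \text{on } \mathbb{T}^n \times [0, T - \tau).
\]
Since the parabolic superjet and subjet are invariant under translation in the $(x,t)$ variables, $v$ is a viscosity solution of $E' = E(F', \mathcal{S}, G')$ with $F'(x,t,r,p,X) = F(x+h, t+\tau, r, p, X)$ and $G'(x,t,r,p) = G(x+h, t+\tau, r, p)$. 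The assumptions \ref{it:F_inc}, \ref{it:G_Lipschitz}, \ref{it:S_mon}, \ref{it:S_bdd} are inherited by $E'$ with the same constants.

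First I would estimate the deviation of the data. By the assumed $\alpha$-Hölder continuity of $F$ and $G$ in $(x,t)$,
\[
\|F - F'\|_\infty \le H_F (|h|^\alpha + \tau^\alpha), \qquad \|G - G'\|_\infty \le H_G (|h|^\alpha + \tau^\alpha),
\]
so $\Lambda(E, E') \le C_1 (|h|^\alpha + \tau^\alpha)$ for a constant depending on $H_F, H_G, \mathcal{S}_{\max}, L_G$. At the initial time I use the $\alpha$-Hölder continuity of $u(\cdot, 0)$ together with the $\alpha$-growth hypothesis:
\[
|v(x, 0) - u(x, 0)| \le |u(x + h, \tau) - u(x + h, 0)| + |u(x + h, 0) - u(x, 0)| \le M \tau^\alpha + H_0 |h|^\alpha.
\]

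Next I would apply Theorem~\ref{thm:cont_dep} (in the torus version, using Remark~\ref{rmk:torus}, so that only the initial datum enters) twice: once treating $u$ as subsolution and $v$ as supersolution of $E'$, and once with the roles reversed, since both $u$ and $v$ solve their respective equations. Each application yields, for $(x, t) \in \mathbb{T}^n \times [0, T - \tau)$,
\[
|u(x, t) - v(x, t)| \le e^{\lambda T} \bigl( H_0 |h|^\alpha + M \tau^\alpha + C_1 (|h|^\alpha + \tau^\alpha) \bigr) + C_1 (|h|^\alpha + \tau^\alpha).
\]
Collecting constants gives $|u(x, t) - u(y, s)| \le H_u (|x - y|^\alpha + |t - s|^\alpha)$ with $H_u$ depending only on $H_F, H_G, H_0, \mathcal{S}_{\max}, L_G, M$ and on $T$, as claimed.

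I do not foresee a serious obstacle: the main points to be careful about are the verification that $v$ satisfies a shifted viscosity equation (which is standard since jets translate) and the application of the continuous dependence theorem on the torus, where the parabolic boundary reduces to $\mathbb{T}^n \times \{0\}$ by Remark~\ref{rmk:torus}. The only mild nuisance is handling $\tau$ of either sign, which is resolved by symmetry, and ensuring that the domain of $v$ still covers the pair of points of interest, which follows because for the estimate at $(x, t)$ versus $(y, s)$ we only need $v$ to be defined up to time $t = \min(t, s)$.
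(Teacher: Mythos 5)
Your proposal is correct and follows essentially the same route as the paper: shift the solution to $v(x,t) = u(x+h, t+\tau)$, observe that $v$ solves the translated equation, bound the initial-time deviation via the Hölder and $\alpha$-growth hypotheses, bound $\Lambda(E,E')$ via the Hölder continuity of $F$ and $G$, and conclude with Theorem~\ref{thm:cont_dep}. The only cosmetic differences are that you spell out the two-sided application of the continuous-dependence estimate and the sign/domain bookkeeping, which the paper leaves implicit.
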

	\begin{proof}
		Let $h > 0$ and define $v(x, t) \coloneqq u(x+e, t+h)$. Then, $v$ is a viscosity solution of
		\[
		v_t + F(x+e, t + h, v, \nabla v, D^2 v) \in \mathcal{S}(v_t) G(x+e, t+h, v, \nabla v).
		\]
		We will call this shifted equation $E_{e, h} \coloneqq E(F_{e, h}, \mathcal{S}, G_{e, h})$ and apply Theorem \ref{thm:cont_dep} to $u(x, t)$ and $v(x, t)$ in order to obtain
		\begin{align*}
		|u(x, t) - v(x, t)| &\le e^{\lambda(E, E_{e, h}) t} \left(\max_{x \in \mathbb{T}^n}\{ |u(x, 0) - v(x, 0)| \}  +\Lambda(E, E_{e, h}) \right) \\
		&\quad - \Lambda(E, E_{e, h}).
		\end{align*}
		Note that $\lambda(E, E_{e, h}) = \mathcal{S}_\mathrm{max} L_G$,
		\[
		\Lambda(E, E_{e, h}) = \frac{||F - F_{e, h}||_\infty+\mathcal{S}_\mathrm{max}||G - G_{e, h}||_\infty}{\lambda(E, E_{e, h})} \le \frac{H_F+\mathcal{S}_\mathrm{max}H_G}{\mathcal{S}_\mathrm{max} L_G} (|e|^\alpha + h^\alpha),
		\]
		and finally
		\begin{align*}
		&\max_{x \in \mathbb{T}^n}\{ |u(x, 0) - v(x, 0)| \} \\
		&\le \max_{x \in \mathbb{T}^n}\{ |u(x, 0) - u(x+e, 0)| + |u(x+e, 0) - u(x+e, h)| \} \\
		&\le H_0 |e|^\alpha + M h^\alpha .
		\end{align*}
		Combining these information we obtain 
		\[
		|u(x, t) - v(x, t)| \le H_u \left( |e|^\alpha + h^\alpha \right).
	\qedhere	\]
	\end{proof}

\section{Application to the pinning problem} \label{sec:application}
In this section, we leave the general setting of \eqref{eq:pde} and return to  the specific case of equation \eqref{eq:main}.

	\subsection{Equivalence of viscosity solutions and weak solutions on the torus}\label{ssec:equivalence}
Establishing an energy equality for solutions of \eqref{eq:main} in $\R^n$ asks for a control on the decay at infinity, which cannot be assumed in general. In the particular case when $\varphi$ represents periodically distributed obstacles an energetic formulation (on the torus instead of on $\Rn$) makes sense. In this case we prove that viscosity solutions are weak solutions which implies that they satisfy an energy equality. This suggests that viscosity solutions are an appropriate solution concept to treat the equation in the whole space.
		 
The equivalence of weak and viscosity solutions was first studied by Ishii  in \cite{Ishii1995}. We are going to use similar methods to prove regularity of viscosity solutions. This will eventually lead us to the equivalence result.
	
	Let us start by an approximation of the problem. For $\epsilon> 0$, we consider the equation
	\begin{equation}\label{eq:approx_problem}
	\begin{array}{rcll}
	u_t + \xi_{\epsilon}(u_t)\varphi(x,u(x)) - \Laplace u &=& f(x, t) & \text{in $\mathbb{T}^n\times (0,\infty)$}, \\
	u(\cdot, 0) &=& 0 & \text{on $\mathbb{T}^n \times \{0\}$} 
	\end{array}
	\end{equation}
	where $\xi_\epsilon : \R \to [-1, 1]$ is a smooth, increasing function with $\xi_\epsilon(a_\epsilon) \to \xi_0$, $\xi_0 \in \partial R(a)$ whenever $a_\epsilon \to a$, $\varphi \in C^0(\mathbb{T}^n \times \R) \cap W^{1, \infty}(\mathbb{T}^n \times \R)$, $f \in C^0(\mathbb{T}^n \times I) \cap W^{1, \infty}(\Rn \times I)$, and $I = (0, T)$.
	
	We refer to \eqref{eq:approx_problem} as the {\it approximate problem} and start by proving that it obeys a comparison principle and that one can find unique viscosity solutions.
	\begin{theorem}[Comparison and existence for the approximate problem]\label{thm:approx_exist}
		The approximate problem obeys a comparison principle, i.e., let $u$ and $v$ be viscosity solutions to the approximate problem with $u(\cdot, 0) \le v(\cdot, 0)$ then $u \le v$ on $\mathbb{T}^n \times I$.
		
		Moreover, it admits a unique viscosity solution $u^\epsilon \in C(\mathbb{T}^n \times I) \cap W^{1, \infty}(\mathbb{T}^n \times I)$
with $u^\epsilon(\cdot, 0) = 0$ for all $x\in \mathbb{T}^n$ and 
		\begin{equation}\label{eq:approx_growth}
			|u^\epsilon(x, t)| \le t \left(||f||_\infty + ||\varphi||_\infty\right).
		\end{equation}
		Finally, the Lipschitz-constant of $u^\epsilon$ does not depend on $\epsilon$.
	\end{theorem}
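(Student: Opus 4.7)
The plan is to cast the approximate problem \eqref{eq:approx_problem} into the framework of \eqref{eq:pde}. Setting $F(x,t,r,p,X) := -\operatorname{tr}(X) - f(x,t)$, $G(x,t,r,p) := \varphi(x,r)$, and $\mathcal{S}(a) := \{-\xi_\epsilon(a)\}$, equation \eqref{eq:approx_problem} becomes a special case of \eqref{eq:pde}. Because $\xi_\epsilon$ is smooth, monotone, and takes values in $[-1,1]$, conditions \ref{it:S_mon} and \ref{it:S_bdd} hold with $\mathcal{S}_\mathrm{max}=1$. Since $F$ depends linearly on $X$ and is independent of $r,p$, the Lipschitz-in-$r$ hypothesis of Corollary \ref{cor:comp_Lipschitz} (with $L_F=0$) and \ref{it:F_modulus} (via $X\le Y \Rightarrow \operatorname{tr}(X)\le \operatorname{tr}(Y)$, together with the Lipschitz modulus of $f$) are immediate. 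From $\varphi \in W^{1,\infty}(\mathbb{T}^n \times \R)$ one obtains \ref{it:G_Lipschitz} with $L_G = \|\partial_r \varphi\|_\infty$ and \ref{it:G_modulus}. The comparison statement then follows by applying Corollary \ref{cor:comp_Lipschitz} on the torus as in Remark \ref{rmk:torus}.

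For existence, set $C_0 := \|f\|_\infty + \|\varphi\|_\infty$ and observe that the affine functions $\underline{u}(x,t) := -C_0 t$ and $\overline{u}(x,t) := C_0 t$ are a classical sub- and a classical supersolution of \eqref{eq:approx_problem}. Indeed, $\partial_t \underline{u} - \Delta \underline{u} + \xi_\epsilon(\partial_t \underline{u})\,\varphi(x,\underline{u}) \le -C_0 + \|\varphi\|_\infty \le -\|f\|_\infty \le f$, and symmetrically for $\overline{u}$. Both vanish at $t=0$, so the hypotheses of Perron's method (Theorem \ref{thm:perron}) are met, producing a viscosity solution $u^\epsilon$ trapped between $\underline{u}$ and $\overline{u}$. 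This trapping yields \eqref{eq:approx_growth}, while uniqueness and the fact that $u^\epsilon \in C(\mathbb{T}^n \times I)$ follow from the comparison principle, which forces the lower- and upper-semicontinuous envelopes produced by Perron to coincide.

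The genuinely delicate point, which will be central for later sections, is the $\epsilon$-independent Lipschitz estimate. I would obtain it by invoking Lemma \ref{lem:hold_cont} with $\alpha=1$. All of its structural hypotheses have already been verified above; it remains to extract Lipschitz moduli. From $f,\varphi \in W^{1,\infty}$ one reads off $H_F = \|\nabla f\|_\infty$ and $H_G = \|\nabla_x \varphi\|_\infty$. The initial datum $u^\epsilon(\cdot,0)\equiv 0$ is trivially Lipschitz with $H_0 = 0$, and the $\alpha$-growth constant is $M = C_0$ by \eqref{eq:approx_growth}. The constant $H_u$ delivered by Lemma \ref{lem:hold_cont} depends only on $H_F,H_G,H_0,\mathcal{S}_\mathrm{max},L_G,M$, none of which involves $\epsilon$.

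The main obstacle I anticipate is precisely to argue rigorously that the Lipschitz bound is stable even though $\xi_\epsilon$ degenerates into a multivalued graph as $\epsilon\to 0$. The key observation rendering this harmless is that $\xi_\epsilon$ enters only through the set-valued map $\mathcal{S}$, whose sole quantitative footprint in both the comparison principle and Lemma \ref{lem:hold_cont} is $\mathcal{S}_\mathrm{max}=1$, uniform in $\epsilon$; the Lipschitz constant $L_G$ of the rate-independent weight $\varphi$ is untouched by the regularization. Once this independence is recorded, uniqueness of $u^\epsilon$ and $u^\epsilon \in W^{1,\infty}(\mathbb{T}^n \times I)$ with an $\epsilon$-independent norm follow, completing the proof.
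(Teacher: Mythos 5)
Your proposal is correct and follows essentially the same route as the paper: rewrite \eqref{eq:approx_problem} in the form \eqref{eq:pde}, verify the structural hypotheses, obtain comparison on the torus via Remark \ref{rmk:torus}, construct the affine barriers $\pm t(\|f\|_\infty+\|\varphi\|_\infty)$ to feed Perron's method (Theorem \ref{thm:perron}), and finally invoke Lemma \ref{lem:hold_cont} with $\alpha=1$ to obtain an $\epsilon$-independent Lipschitz bound, the $\epsilon$-independence coming from $\mathcal{S}_\mathrm{max}=1$ and $L_G$ being unaffected by the regularization. The only cosmetic difference is that you route comparison through Corollary \ref{cor:comp_Lipschitz} with $L_F=0$, whereas the paper applies Theorem \ref{thm:comp} directly; since $F$ is independent of $r$ these are interchangeable.
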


	\begin{proof}
		We can rewrite the approximate problem as $$u_t + F(x, t, D^2 u) = S_\epsilon(u_t)G(x, u) ~~\text{ in } \mathbb{T}^n \times (0, T)$$
		with $F(x, t, X) \coloneqq -\operatorname{tr}(X) - f(x, t)$, $S_\epsilon(a) \coloneqq -\xi_\epsilon(a)$, and $G(x, r) \coloneqq \varphi(x, r)$. Hence, $F$, $G$ and $S_\epsilon$ satisfy all assumptions of Theorem \ref{thm:comp} (see also Remark \ref{rmk:torus}), i.e., comparison holds.
		
		In order to construct a solution by Perron's method, we have prove that a sub- and supersolution exist that satisfy the boundary condition in a strong sense. To this aim, define $\underline{u}(x, t) \coloneqq -t\left(||f||_\infty + ||\varphi||_\infty\right)$ and $\overline{u}(x, t) \coloneqq t \left(||f||_\infty + ||\varphi||_\infty\right)$. These functions are sub- and supersolutions respectively and, by comparison, \eqref{eq:approx_growth} holds for the Perron solution $u^\epsilon$ (see Theorem \ref{thm:perron}).
		
		As the unique solution $u^\epsilon$ satisfies the growth condition \eqref{eq:approx_growth}, we can use Lemma \ref{lem:hold_cont} with $\alpha = 1$ and check that $u^\epsilon$ is indeed Lipschitz-continuous and the Lipschitz-constant $L_{u^\epsilon}$ is independent of $\epsilon$. In fact $L_{u^\epsilon} $ depends only on $||f||_{W^{1, \infty}},||\varphi||_{W^{1, \infty}}$, and $||\xi_\epsilon||_\infty$.
	\end{proof}
	
	\begin{theorem}[Improved regularity for the approximate problem]\label{thm:approx_improv_reg}
		Let $u^\epsilon$ be the unique viscosity solution of the approximate problem. Then, for almost every $t \in I$, $u^\epsilon(\cdot, t) \in W^{2, p}(\mathbb{T}^n) \cap C^{1, \alpha}(\mathbb{T}^n)$ and it holds
		\begin{align*}
		&||u^\epsilon(\cdot, t)||_{W^{2, p}(\mathbb{T}^n)} + ||u^\epsilon(\cdot, t)||_{C^{1, \alpha}(\mathbb{T}^n)} \\ &\le C(n, p, \alpha)\left( ||u^\epsilon||_{W^{1, \infty}(\mathbb{T}^n \times I)} +||\varphi||_{L^\infty(\mathbb{T}^n \times \R)} + ||f||_{L^\infty(\mathbb{T}^n)}\right).
		\end{align*}
	\end{theorem}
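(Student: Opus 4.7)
The plan is to freeze time and apply elliptic $L^p$-regularity for the Poisson equation on the torus. From Theorem~\ref{thm:approx_exist}, $u^\epsilon$ is Lipschitz on $\mathbb{T}^n \times I$ with a Lipschitz constant $L$ that is independent of $\epsilon$; by Rademacher's theorem together with Fubini, there is a full-measure set $\mathcal{T} \subset I$ such that for every $t \in \mathcal{T}$ the partial derivative $u_t^\epsilon(\cdot, t)$ exists a.e.\ on $\mathbb{T}^n$ and is bounded by $L$. Fixing such a $t$, I would rewrite the approximate equation as
$$-\Laplace u^\epsilon(\cdot, t) = g_t, \qquad g_t(x) \coloneqq f(x,t) - \xi_\epsilon\bigl(u_t^\epsilon(x,t)\bigr)\,\varphi\bigl(x, u^\epsilon(x,t)\bigr) - u_t^\epsilon(x,t),$$
and observe that $\|g_t\|_{L^\infty(\mathbb{T}^n)} \le \|f\|_\infty + \|\varphi\|_\infty + \|u^\epsilon\|_{W^{1,\infty}}$ since $|\xi_\epsilon| \le 1$.

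Once this Poisson equation is shown to hold in the weak (distributional) sense, the Calderón--Zygmund $L^p$-theory for the Laplacian on $\mathbb{T}^n$ delivers
$$\|u^\epsilon(\cdot, t)\|_{W^{2,p}(\mathbb{T}^n)} \le C(n,p)\bigl(\|g_t\|_{L^p(\mathbb{T}^n)} + \|u^\epsilon(\cdot, t)\|_{L^p(\mathbb{T}^n)}\bigr)$$
for every $1 < p < \infty$. Since $\mathbb{T}^n$ has finite measure, the $L^p$ norms on the right are controlled by the respective $L^\infty$ norms and hence by the quantities that appear in the statement. The $C^{1,\alpha}$ bound then follows from the Morrey-type Sobolev embedding $W^{2,p}(\mathbb{T}^n) \hookrightarrow C^{1,\alpha}(\mathbb{T}^n)$ for $p > n/(1-\alpha)$.

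The main obstacle is the frozen-time reduction itself, i.e.\ showing that $u^\epsilon(\cdot, t)$ is a distributional solution of the Poisson equation with the $L^\infty$-source $g_t$ in a sense strong enough to feed into Calderón--Zygmund. A convenient way to avoid freezing the viscosity inequality directly is to exploit that $\xi_\epsilon$ is smooth and $u^\epsilon$ is Lipschitz, so that the source $h(x,t) \coloneqq f(x,t) - \xi_\epsilon(u_t^\epsilon(x,t))\varphi(x, u^\epsilon(x,t))$ belongs to $L^\infty(\mathbb{T}^n \times I)$. Let $v$ be the unique weak solution on $\mathbb{T}^n \times I$ of the linear heat equation $v_t - \Laplace v = h$ with $v(\cdot, 0) = 0$; parabolic $L^p$-theory places $v$ in $W^{1,p}(I; L^p(\mathbb{T}^n)) \cap L^p(I; W^{2,p}(\mathbb{T}^n))$, and being a strong solution, $v$ is also a viscosity solution of the same equation. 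Identifying $u^\epsilon = v$---either via the comparison/uniqueness theory of Section~\ref{ssec:comp_existence} applied to the now linear equation, or by testing the viscosity property of $u^\epsilon$ against a smooth mollification of $v$---then transfers the parabolic $W^{2,p}$-regularity of $v$ to $u^\epsilon$, and extracting slices at a.e.\ $t \in \mathcal{T}$ produces the claimed estimate, which the Morrey embedding upgrades to the $C^{1,\alpha}$ bound.
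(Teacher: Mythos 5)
Your outline correctly identifies both the target (freeze time, apply elliptic $L^p$-theory) and the crux (passing from the viscosity formulation to a distributional Poisson equation for $u^\epsilon(\cdot,t)$), but the bridge you propose to cross that gap does not hold up.

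The problem is in the identification $u^\epsilon = v$, where $v$ solves $v_t - \Delta v = h$ with $h(x,t) \coloneqq f(x,t) - \xi_\epsilon(u_t^\epsilon(x,t))\varphi(x,u^\epsilon(x,t))$. To conclude $u^\epsilon = v$ you would need to know that $u^\epsilon$ itself solves $u^\epsilon_t - \Delta u^\epsilon = h$, either as a viscosity solution with source $h$ or distributionally. Neither is available. First, $h$ is only $L^\infty$ (it is built from the a.e.-defined $u_t^\epsilon$), while the comparison theory of Section~\ref{ssec:comp_existence} requires a continuous right-hand side $F$; viscosity comparison for merely bounded measurable sources is a different and delicate theory that is not invoked in the paper. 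Second, even ignoring the continuity issue, the viscosity inequalities for $u^\epsilon$ constrain the jet element $a$ (which can range over $\mathcal{P}^{2,\pm}$ at every point), not the a.e.-derivative $u_t^\epsilon(x,t)$; there is no step in your argument that shows the viscosity condition holds with $a$ replaced by $u_t^\epsilon$ and $\xi_\epsilon(a)$ by the frozen value $\xi_\epsilon(u_t^\epsilon)$ on a full-measure set, which is exactly what would be needed. And if instead you tried to verify $u^\epsilon = v$ on the distributional level, you would be assuming that $u^\epsilon$ satisfies the equation a.e./weakly, which is precisely the statement to be proved. The ``mollify $v$ and test'' alternative is too vague to fill this in.

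The paper closes this gap with a concrete regularization device: form the sup- and inf-convolutions $u^\rho$ and $u_\rho$ of $u^\epsilon$, which are semi-convex/semi-concave; apply Alexandroff's theorem to obtain a.e.\ second-order jets for them; use the magic property $(a,p,X)\in\mathcal{P}^{2,+}u^\rho(x,t) \Rightarrow (a,p,X)\in\mathcal{P}^{2,+}u^\epsilon(x+\rho p,t+\rho a)$ to show $u^\rho$ (resp.~$u_\rho$) satisfies the equation up to a uniformly small error $\Psi^\rho$ a.e.; test against nonnegative $C_c^\infty$ functions and send $\rho\to 0$. Only then does one have a pair of weak inequalities on $\nabla u^\epsilon\cdot\nabla\psi$ that, after slicing $\psi = \psi_1^n(t)\psi_2(x)$ with $\psi_1^n\to\delta_{t_0}$, identify $\Delta u^\epsilon(\cdot,t)$ as a bounded linear functional and hence an $L^\infty$ function for a.e.\ $t$, at which point your elliptic-regularity and Sobolev-embedding steps apply verbatim. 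So the elliptic endgame of your proposal is fine, but the intermediate step must be replaced by the sup/inf-convolution argument (or an equivalent Ishii-type device for passing from viscosity to distributional formulations).
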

	
	\begin{proof}
	From  Theorem \ref{thm:approx_exist} we know that $u^\epsilon$ is Lipschitz-continuous with a constant which does not depend on the Lipschitz-constant of $\xi_\epsilon$. We will now prove that $u^\epsilon$ has improved space regularity. To simplify notation we will write $u = u^\epsilon$ for the rest of this proof. We introduce the sup- and inf-convolutions of $u$, i.e.,
		\begin{align*}
		u^\rho(x, t) &\coloneqq \sup_{(y, s)\in \mathbb{T}^n \times I} \left\{u(y, s) - \tfrac{|(y, s)-(x, t)|^2}{2\rho}\right\}, \\
		u_\rho(x, t) &\coloneqq \inf_{(y, s)\in \mathbb{T}^n \times I} \left\{u(y, s) + \tfrac{|(y, s)-(x, t)|^2}{2\rho}\right\}.
		\end{align*}
		As these functions are semi-convex and semi-concave, respectively, we can apply Alexandroff's theorem to conclude that for almost every pair $(x, t) \in \mathbb{T}^n \times I$, we have
		\begin{align*}		
		(\partial_t {u}^\rho(x, t), \nabla{u}^\rho(x, t), D^2 u^\rho(x, t)) &\in \mathcal{P}^{2, +} u^\rho(x, t),  \\
		(\partial_t u_\rho(x, t), \nabla u_\rho(x, t), D^2  u_\rho(x, t)) &\in \mathcal{P}^{2, -} u_\rho(x, t).
		\end{align*}
		Furthermore, the Lipschitz-continuity of $u$ is carried over to the convolutions with the same constant. The Lipschitz-constant bounds the absolute value of the first order jet elements of the convolutions. Moreover, by the {\it magic property} of sup-, inf-convolutions (see for instance \cite[Chapter~4,~Theorem~7]{Katzourakis2015}), it follows that if
		$(a, p, X) \in \mathcal{P}^{2, +} u^\rho(x, t)$ then $(a, p, X) \in \mathcal{P}^{2, +} u(x + \rho p, t + \rho a)$. Hence, we can compute, that
		\begin{align*}
		& a + \xi_\epsilon(a) \varphi(x, u^\rho(x, t)) - \tr(X) - f(x, t) \\
		&\le \xi_\epsilon(a)\left( \varphi(x, u^\rho(x, t)) - \varphi(x+\rho p, u(x + \rho p, t+\rho a))\right) \\
&\quad- \left( f(x, t) - f(x+\rho p, t+\rho a)\right) \\
		&\le L_\varphi \left( \rho |p| + |u^\rho(x, t) - u(x + \rho p, t+\rho a)| \right) + \rho L_f (|a| + |p|) \\
		&\le \rho \left( L_\varphi L_u + 2L_f L_u \right) + L_\varphi \left(|u^\rho(x, t) - u(x, t)| + |u(x, t) - u(x + \rho p, t+\rho a)|\right) \\
		&\le \rho \left( L_\varphi L_u + 2L_f L_u + 2 L_\varphi L_u^2\right) + L_\varphi |u^\rho(x, t) - u(x, t)| \\
		&\eqqcolon \Psi^\rho(x, t),
		\end{align*}
		with $\Psi^\rho$ uniformly bounded and $\Psi^\rho \to 0$ in $C^0_{loc}(\mathbb{T}^n \times I)$. In this series of inequalities we also used the aforementioned Lipschitz-continuity of the convolution and the bounds on the jet elements. We obtain a similar inequality for the inf-convolutions by changing $\le$ by $\ge$ and $\Psi^\rho$ by a $\Psi_\rho$ which has the same uniform bound and also converges to zero. We hence proved that the sup-, inf-convolutions satisfy
		\begin{align*}
		u^\rho_t + \xi(u^\rho_t) \varphi(x, u^\rho(x, t)) - \Delta u^\rho - f(x, t) &\le \Psi^\rho(x, t), \\
		(u_\rho)_t + \xi((u_\rho)_t) \varphi(x, u_\rho(x, t)) - \Delta u_\rho - f(x, t) &\ge \Psi_\rho(x, t),
		\end{align*}
		in the viscosity sense and due to Alexandroff's theorem also almost everywhere. Therefore, we can take a function $\psi \in C^\infty_c(\mathbb{T}^n \times I)$ with $\psi \ge 0$, multiply the equations and integrate to obtain
		\[
		\int_0^T \int_{\mathbb{T}^n} u^\rho_t \psi +\xi(u^\rho_t) \varphi(x, u^\rho) \psi + \nabla u^\rho \cdot \nabla \psi - f \psi \;\mathrm{d}x\;\mathrm{d}t \le \int_0^T \int_{\mathbb{T}^n} \Psi^\rho \psi \;\mathrm{d}x\;\mathrm{d}t.
		\]
		As the first weak derivative of $u$ exists and $u^\rho \to u$ locally uniformly, it is easy to show that $u^\rho_t \weakto u_t$ and $\nabla u^\rho \weakto \nabla u$ in $L^p$ for all $p < \infty$. Moreover, one can find $\eta \in L^\infty(\mathbb{T}^n \times I)$ such that $\xi(u^\rho_t) \weakstarto \eta$ with $|\eta| \le 1$. Sending $\rho \to 0$ we conclude that
		\[
		\int_0^T \int_{\mathbb{T}^n} u_t \psi +\eta\varphi(x, u) \psi + \nabla u \cdot \nabla \psi - f \psi \;\mathrm{d}x\;\mathrm{d}t \le 0.
		\]
		Analogously for the inf-convolutions, we can find an $\tilde{\eta} \in L^\infty(\mathbb{T}^n \times I)$ such that 
		\[
		\int_0^T \int_{\mathbb{T}^n} u_t \psi +\tilde{\eta}\varphi(x, u) \psi + \nabla u \cdot \nabla \psi - f \psi \;\mathrm{d}x\;\mathrm{d}t \ge 0.
		\]
		Combining both inequalities, we see that for all $\psi \in C^\infty_c(\mathbb{T}^n \times I), \psi \ge 0$ we have
		\begin{align*}
		\int_0^T \int_{\mathbb{T}^n} -u_t \psi -\tilde{\eta}\varphi(x, u) \psi + f \psi \;\mathrm{d}x\;\mathrm{d}t &\le \int_0^T \int_{\mathbb{T}^n} \nabla u \cdot \nabla \psi\;\mathrm{d}x\;\mathrm{d}t \\& \le \int_0^T \int_{\mathbb{T}^n} -u_t \psi -\eta\varphi(x, u) \psi + f \psi \;\mathrm{d}x\;\mathrm{d}t.
		\end{align*}
		By choosing $\psi = \psi^n_1(t) \psi_2(x)$ with $\psi_1^n \in C^\infty_c(I), \psi^n_1 \ge 0$ and $\psi_2 \in C^\infty_c(\mathbb{T}^n), \psi_2 \ge 0$ and letting $\psi^n_1 \to \delta_{t_0}$ in distribution, we have that for almost every $t\in I$ the following inequality holds
		\begin{align*}
		&\int_{\mathbb{T}^n} -u_t(\cdot, t) \psi_2 -\tilde{\eta}\varphi(x, u(\cdot, t)) \psi_2 + f \psi_2 \;\mathrm{d}x \\&\le \int_{\mathbb{T}^n} \nabla u(\cdot, t) \cdot \nabla \psi_2\;\mathrm{d}x \\&\le  \int_{\mathbb{T}^n} -u_t(\cdot, t) \psi_2 -\eta\varphi(x, u(\cdot, t)) \psi_2 + f \psi_2 \;\mathrm{d}x.
		\end{align*}
		Note that $-u_t(\cdot, t) - \tilde{\eta}\varphi(x, u(\cdot, t)) + f, -u_t(\cdot, t) - \eta\varphi(x, u(\cdot, t)) + f \in L^\infty(\mathbb{T}^n)$. This shows us that the weak Laplacian of $u(\cdot, t)$ is a linear and continuous functional from $(W^{1, 2}(\mathbb{T}^n), ||\cdot||_{L^1})$ to $\R$. By extension, the weak Laplacian is an element of $(L^1(\mathbb{T}^n))'$ and therefore we have that $-\Delta u(\cdot, t) \in L^\infty(\mathbb{T}^n)$ with
		\begin{align*}
			||\Delta u(\cdot, t)||_{L^\infty(\mathbb{T}^n)}
			&\le ||u||_{W{1, \infty}(\mathbb{T}^n \times I)} + ||\varphi||_{L^\infty(\mathbb{T}^n \times \R)} + ||f||_{L^\infty(\mathbb{T}^n \times I)}.
		\end{align*}
		At this point standard regularity theory applies. As the torus is a compact manifold, we have for almost every $t \in I$ that $u(\cdot, t) \in W^{2, p}(\mathbb{T}^n) \cap C^{1, \alpha}(\mathbb{T}^n)$ with
		\begin{align*}
			&||u(\cdot, t)||_{W^{2, p}(\mathbb{T}^n)} + ||u(\cdot, t)||_{C^{1, \alpha}(\mathbb{T}^n)} \\
&\quad\le C(n, p, \alpha) (||u||_{L^\infty(\mathbb{T}^n \times I)} + ||\Delta u(\cdot, t)||_{L^\infty(\mathbb{T}^n)} ).
		\qedhere \end{align*}
	\end{proof}
	
	\begin{corollary}[Equivalence of solutions for the approximate problem]
		The unique viscosity solution $u^\epsilon$ of the approximate problem is a strong solution, i.e., we have that
	\begin{align}\label{eq:approx_weak}
	u^\epsilon_t(x, t) + \xi_\epsilon(u^\epsilon_t(x, t)) \varphi(x, u^\epsilon(x, t)) - \Delta u^\epsilon(x, t) = f(x, t)\
		\end{align} 
for almost every $(x,t)\in \mathbb{T}^n \times (0,T)$. Such strong solution is unique.
	\end{corollary}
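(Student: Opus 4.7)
The proof splits into establishing the PDE a.e.\ and then deriving uniqueness.

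For the PDE a.e., the key simplification is that, since $\xi_\epsilon$ is smooth and single-valued, the associated multifunction $\mathcal{S}(a)=\{-\xi_\epsilon(a)\}$ is a singleton; consequently, at any triple $(a,p,X)\in\mathcal{P}^{2,+}u^\epsilon(x,t)\cap\mathcal{P}^{2,-}u^\epsilon(x,t)$ the two viscosity inequalities from Definition \ref{def:vis_sol} collapse to the pointwise identity
\[
a+\xi_\epsilon(a)\,\varphi(x,u^\epsilon(x,t))-\tr X=f(x,t).
\]
The task is therefore to exhibit such a triple, given by classical derivatives, at a.e.\ $(x,t)$.

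From Theorem \ref{thm:approx_exist} we have $u^\epsilon\in W^{1,\infty}(\mathbb{T}^n\times I)$, so by Rademacher/ACL the classical derivative $u^\epsilon_t$ exists at a.e.\ $(x,t)$. From Theorem \ref{thm:approx_improv_reg}, $u^\epsilon(\cdot,t)\in W^{2,p}(\mathbb{T}^n)\cap C^{1,\alpha}(\mathbb{T}^n)$ for a.e.\ $t$ with bounds uniform in $t$, so a Calder\'on--Zygmund-type argument yields spatial pointwise twice-differentiability at a.e.\ $x$ (for each such $t$). A Fubini-type argument, using the uniform-in-$t$ $C^{1,\alpha}$ control in $x$ to handle the cross terms, then produces a full-measure set of $(x_0,t_0)$ at which the joint parabolic Taylor expansion
\begin{align*}
u^\epsilon(y,s)&=u^\epsilon(x_0,t_0)+u^\epsilon_t(x_0,t_0)(s-t_0)+\langle \nabla u^\epsilon(x_0,t_0),y-x_0\rangle\\
&\quad+\tfrac12\langle D^2u^\epsilon(x_0,t_0)(y-x_0),y-x_0\rangle+o(|y-x_0|^2+|s-t_0|)
\end{align*}
holds. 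At each such point the classical derivative triple belongs to both $\mathcal{P}^{2,+}u^\epsilon(x_0,t_0)$ and $\mathcal{P}^{2,-}u^\epsilon(x_0,t_0)$, and \eqref{eq:approx_weak} follows.

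For uniqueness, any strong solution $w\in W^{1,\infty}(\mathbb{T}^n\times I)$ of \eqref{eq:approx_weak} with $w(\cdot,0)=0$ is automatically a viscosity solution: at a jet $(a,p,X)\in\mathcal{P}^{2,+}w(x,t)$, the Taylor test at $(x,t)$ combined with $w$ solving \eqref{eq:approx_weak} a.e.\ and the strict monotonicity of $b\mapsto b+\xi_\epsilon(b)\varphi$ (recall $\varphi\ge 0$) yields $a+\xi_\epsilon(a)\varphi-\tr X\le f$, and analogously for subjets. Since $w(\cdot,0)=0=u^\epsilon(\cdot,0)$, the comparison principle from Theorem \ref{thm:approx_exist} forces $w=u^\epsilon$, so the strong solution is unique and coincides with the viscosity solution.

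The main technical obstacle is the Fubini step that assembles a.e.\ pointwise spatial twice-differentiability and a.e.\ temporal differentiability into a genuine parabolic jet: a priori $u^\epsilon_t$ is only $L^\infty$, so controlling the cross remainder of the form $[\nabla u^\epsilon(y,s)-\nabla u^\epsilon(y,t_0)]\cdot(y-x_0)$ is delicate. The uniform-in-$t$ $C^{1,\alpha}$ bound supplied by Theorem \ref{thm:approx_improv_reg} is precisely what makes this step go through.
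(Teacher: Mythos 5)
Your argument for the identity \eqref{eq:approx_weak} a.e.\ is essentially the paper's: use Theorem \ref{thm:approx_improv_reg} for spatial second derivatives, Rademacher (via Theorem \ref{thm:approx_exist}) for $u^\epsilon_t$, observe that at points where the classical derivatives form an element of $\mathcal{P}^{2,+}u^\epsilon\cap\mathcal{P}^{2,-}u^\epsilon$ the two viscosity inequalities collapse to an equality because $\mathcal{S}(a)=\{-\xi_\epsilon(a)\}$ is single-valued. You are right to flag that passing from ``$u^\epsilon_t$, $\nabla u^\epsilon$, $D^2u^\epsilon$ exist a.e.'' to membership in the \emph{parabolic} jets requires the joint second-order expansion, a point the paper treats tersely; however, your proposed fix (Fubini plus the uniform-in-$t$ $C^{1,\alpha}$ bound) does not quite close it, since the time-slice Hölder control of $\nabla u^\epsilon$ is too weak to absorb the cross term and the time remainder $\int_{t_0}^{s}\bigl(u^\epsilon_t(y,\tau)-u^\epsilon_t(x_0,t_0)\bigr)\,\mathrm{d}\tau$ with $y\neq x_0$. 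The clean route is to note $u^\epsilon\in W^{2,1}_p(\mathbb{T}^n\times I)$ for every $p<\infty$ (bounded $u^\epsilon_t$ plus the $W^{2,p}$ spatial bounds of Theorem \ref{thm:approx_improv_reg}) and invoke the parabolic Calder\'on--Zygmund a.e.\ twice-differentiability theorem for $W^{2,1}_p$, $p>n+1$; this is in the spirit of what the paper implicitly uses.

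Your uniqueness argument, by contrast, departs from the paper and has a genuine gap. You claim that any strong solution $w$ with $w(\cdot,0)=0$ is automatically a viscosity solution by ``the Taylor test at $(x,t)$ combined with $w$ solving \eqref{eq:approx_weak} a.e.'' This does not work as stated: the equation holds only off a null set, and a point carrying a superjet element of $w$ may well lie in that null set, so you cannot evaluate the a.e.\ identity there; moreover a merely $W^{1,\infty}$ function need not possess second derivatives at any point, so the class in which you argue is unclear. Turning ``a.e.\ strong solution $\Rightarrow$ viscosity solution'' into a theorem requires extra structure (e.g.\ $W^{2,1}_p$ regularity and an ABP/$L^p$-viscosity argument in the spirit of Ishii's equivalence results, or semiconvexity via Jensen's lemma). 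The paper sidesteps all of this: uniqueness of strong solutions is obtained directly by a Gronwall energy estimate, testing the difference of two strong solutions with the difference of their time derivatives and using the monotonicity of $\xi_\epsilon$, the sign $\varphi\ge 0$, and the Lipschitz continuity of $\varphi$. Either supply the missing strong-to-viscosity step with the appropriate regularity hypotheses, or replace your comparison argument by this elementary energy estimate.
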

	\begin{proof}
		By Theorem \ref{thm:approx_improv_reg} for almost every $t\in I$ we have $u^\epsilon(\cdot, t) \in W^{2, p}(\mathbb{T}^n) \cap C^{1, \alpha}(\mathbb{T}^n)$ for every $p \in [1, \infty), \alpha \in [0, 1)$ and in particular the derivative of $u^\epsilon(\cdot, t)$ is absolutely continuous. As an absolutely continuous function $\nabla u^\epsilon(\cdot, t)$ is differentiable almost everywhere. Furthermore, $u^\epsilon$ is also Lipschitz-continuous and by Rademacher's theorem $u^\epsilon_t$ exists for almost every $(x, t) \in \mathbb{T}^n \times I$. Combining these information we see that at almost every point in $(x, t) \in \mathbb{T}^n \times I$ the functions $u^\epsilon_t, \nabla u^\epsilon$, and $D^2 u^\epsilon$ exist. Hence,
		\[
			(u^\epsilon_t(x, t), \nabla u^\epsilon(x, t), D^2 u^\epsilon(x, t)) \in \mathcal{P}^{2, +} u^\epsilon(x, t) \cap \mathcal{P}^{2, -} u^\epsilon(x, t)
		\]
		almost everywhere. Therefore, \eqref{eq:approx_weak} follows. Uniqueness can be checked by a standard Gronwall-Lemma argument.
	\end{proof}

	\begin{theorem}[Equivalence of solutions to the original problem]
		The unique viscosity solution $u \in C^0(\mathbb{T}^n \times I)$ of equation \eqref{eq:main} is a weak solution, i.e., there is a function $\eta \in L^\infty(\mathbb{T}^n \times I)$ such that for all $w \in L^2(I, W^{1, 2}(\mathbb{T}^n))$
		\begin{equation}\nonumber
		\int_0^T \int_{\mathbb{T}^n} u_t w + \eta \varphi(x, u) w + \nabla u \cdot \nabla w \;\mathrm{d}x\;\mathrm{d}s = \int_0^T \int_{\mathbb{T}^n} f w  \;\mathrm{d}x\;\mathrm{d}s
		\end{equation}		
		and $\eta \in \partial R(u_t)$ almost everywhere and $u(\cdot, 0) = 0$.
	\end{theorem}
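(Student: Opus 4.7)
The plan is to construct the weak solution by a limit procedure from the strong solutions $u^\epsilon$ of the approximate problem \eqref{eq:approx_problem}, and then invoke viscosity uniqueness to identify the limit with $u$.

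First, Theorems \ref{thm:approx_exist} and \ref{thm:approx_improv_reg} produce unique strong solutions $u^\epsilon$ of \eqref{eq:approx_problem} that are uniformly Lipschitz in $(x,t)$ and whose spatial Laplacian is uniformly bounded in $L^\infty$, with $u^\epsilon(\cdot,0)=0$. By Arzel\`a--Ascoli, a (non-relabeled) subsequence $u^\epsilon$ converges uniformly to some limit $u^* \in C^0(\mathbb{T}^n\times I)$, and by Banach--Alaoglu we extract $\partial_t u^\epsilon \weakstarto \partial_t u^*$, $\nabla u^\epsilon\weakstarto \nabla u^*$, $\Delta u^\epsilon \weakstarto \Delta u^*$, and $\xi_\epsilon(\partial_t u^\epsilon)\weakstarto \eta$ in $L^\infty$, with $|\eta|\le 1$. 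The defining property $\xi_\epsilon(a_\epsilon)\to \xi_0\in \partial R(a)$ whenever $a_\epsilon\to a$ is exactly the hypothesis of Proposition \ref{thm:stability} applied with $\mathcal S = -\partial R$, so $u^*$ is a viscosity solution of \eqref{eq:main}. The torus comparison principle (Theorem \ref{thm:comp} together with Remark \ref{rmk:torus}) then forces $u^*=u$, and the whole sequence converges.

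Next, each $u^\epsilon$ obeys \eqref{eq:approx_weak}; testing against an arbitrary $w\in L^2(I;W^{1,2}(\mathbb{T}^n))$ and integrating, the terms $\int \partial_t u^\epsilon\, w$, $\int \nabla u^\epsilon\cdot \nabla w$, and $\int f w$ pass to their natural limits by the weak-$*$ convergences above. For the non-linear term, the Lipschitz continuity of $\varphi$ together with $u^\epsilon\to u$ uniformly gives $\varphi(\cdot, u^\epsilon)\to \varphi(\cdot,u)$ uniformly, so $\xi_\epsilon(\partial_t u^\epsilon)\,\varphi(\cdot, u^\epsilon)$ converges weakly-$*$ in $L^\infty$ to $\eta\,\varphi(\cdot,u)$. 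Passing to the limit in the identity produces the weak equation of the statement with this candidate $\eta$.

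The main obstacle is the subdifferential inclusion $\eta\in \partial R(u_t)$ a.e. I would address it by a Minty-type argument. Monotonicity of $\xi_\epsilon$ gives, for any smooth $v$,
\[
\int_0^T\!\!\int_{\mathbb{T}^n} \bigl(\xi_\epsilon(\partial_t u^\epsilon) - \xi_\epsilon(v)\bigr)\bigl(\partial_t u^\epsilon - v\bigr)\,\mathrm{d}x\,\mathrm{d}t \ge 0,
\]
and passing to the limit requires controlling $\limsup_\epsilon \int \xi_\epsilon(\partial_t u^\epsilon)\,\partial_t u^\epsilon\,\varphi(\cdot,u^\epsilon)\,\mathrm{d}x\,\mathrm{d}t$. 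This is obtained from the energy identity obtained by multiplying \eqref{eq:approx_weak} by $\partial_t u^\epsilon$:
\[
\int_0^T\!\!\int_{\mathbb{T}^n}|\partial_t u^\epsilon|^2 + \xi_\epsilon(\partial_t u^\epsilon)\,\partial_t u^\epsilon\,\varphi(\cdot,u^\epsilon)\,\mathrm{d}x\,\mathrm{d}t + \tfrac{1}{2}\|\nabla u^\epsilon(\cdot,T)\|_{L^2}^2 = \int_0^T\!\!\int_{\mathbb{T}^n} f\,\partial_t u^\epsilon\,\mathrm{d}x\,\mathrm{d}t.
\]
The right-hand side converges by weak convergence of $\partial_t u^\epsilon$; the gradient term is controlled via strong convergence $\nabla u^\epsilon(\cdot,T)\to \nabla u(\cdot,T)$ (inherited from the uniform $W^{2,p}$ bound and an Aubin--Lions type compactness argument); the $L^2$-norm of $\partial_t u^\epsilon$ is handled by weak lower semicontinuity. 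Maximal monotonicity of $\partial R$ in $L^2$ then yields the inclusion on $\{\varphi>0\}$, where $\eta$ is effectively determined. On $\{\varphi=0\}$ the value of $\eta$ does not enter the weak identity and can be redefined to belong to $\partial R(u_t)$ pointwise, giving the global inclusion and completing the proof.
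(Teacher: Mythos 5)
Your proposal is correct and follows essentially the same route as the paper: pass to the limit in the strong solutions $u^\epsilon$ of the approximate problem using the uniform Lipschitz bounds, identify the limit with the unique viscosity solution via stability and comparison, obtain the weak identity from \eqref{eq:approx_weak} by weak-$*$ convergence, and recover $\eta\in\partial R(u_t)$ from a limsup energy estimate combined with a Minty-type monotonicity argument, redefining $\eta$ on $\{\varphi=0\}$. The only cosmetic difference is that you phrase the identification step through monotonicity of $\xi_\epsilon$ (with the weight $\varphi(\cdot,u^\epsilon)\ge 0$ implicitly inserted), while the paper uses the lower semicontinuity of $(u,v)\mapsto R(u)\varphi(\cdot,v)$ together with the same energy limsup; moreover, weak lower semicontinuity of $\|\nabla u^\epsilon(\cdot,T)\|_{L^2}$ would suffice where you invoke strong convergence.
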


	\begin{proof}
		For $\epsilon > 0$ given consider the unique viscosity and strong solution $u^\epsilon$ of the approximate problem. This is Lipschitz-continuous with a Lipschitz-constant that is independent of $\epsilon$, see Theorem \ref{thm:approx_exist}. Therefore, we can extract a converging subsequence using Arzel\`a-Ascoli's Theorem. The limit $u \in C^0(\mathbb{T}^n \times I)$ is a viscosity solution of \eqref{eq:main} by Theorem \ref{thm:stability} with $u(\cdot, 0) = 0$. As comparison also holds for the limit equation, this limit is the unique viscosity solution.
		
		On the other hand, for some not relabeled subsequences we also have 
		\begin{align*}
			&u^\epsilon \to u \text{ strongly in } C^0(I, W^{1, 2}(\mathbb{T}^n)), \\
			&u^\epsilon \weakto u \text{ weakly in } L^2(I, W^{2, 2}(\mathbb{T}^n)) \cap W^{1, 2}(I, L^2(\mathbb{T}^n)), \\
			&\xi_\epsilon(u_t^\epsilon) \weakstarto \eta \text{ weakly-* in } L^\infty(I \times \mathbb{T}^n).
		\end{align*}
		Sending $\epsilon \to 0$ in equation \eqref{eq:approx_weak} entails
		\[
		\int_0^T \int_{\mathbb{T}^n} u_t w + \eta \varphi(x, u) w + \nabla u \cdot \nabla w \;\mathrm{d}x\;\mathrm{d}s = \int_0^T \int_{\mathbb{T}^n} f w  \;\mathrm{d}x\;\mathrm{d}s.
		\]
		It remains to prove that $\eta \in \partial R(u_t)$. Let us first compute
		\begin{align*}
			&\limsup_{\epsilon \to 0} \int_{0}^{T} \int_{\mathbb{T}^n} \xi_{\epsilon}(u_t^\epsilon) \varphi(x, u^\epsilon) u^\epsilon_t 
			= \limsup_{\epsilon \to 0} \int_{0}^{T} \int_{\mathbb{T}^n} (-u^\epsilon_t +\Delta u + f) u^\epsilon_t \\
			&= \limsup_{\epsilon \to 0}\left( - \int_{0}^{T} ||u_t^\epsilon||^2_{L^2} - \frac{1}{2}||\nabla u^\epsilon(T)||^2_{L^2} + \frac{1}{2}||\nabla u^\epsilon(0)||^2_{L^2} + \int_{0}^{T} \int_{\mathbb{T}^n} f u^\epsilon_t \right) \\
			&\le\left(- \int_{0}^{T} ||u_t||^2_{L^2} - \frac{1}{2}||\nabla u(T)||^2_{L^2} + \frac{1}{2}||\nabla u(0)||^2_{L^2} + \int_{0}^{T} \int_{\mathbb{T}^n} f u_t \right) \\
			&=\int_{0}^{T} \int_{\mathbb{T}^n} (-u_t +\Delta u + f) u_t = \int_{0}^{T} \int_{\mathbb{T}^n} \eta \varphi(x, u) u_t.
		\end{align*}
		The latter and the lower-semicontinuity of $(u, v) \mapsto R(u) \varphi(\cdot, v)$ entail that
		\begin{align*}
			&\int_0^T\int_{\mathbb{T}^n} \eta \varphi(x, u) (w-u_t) + \int_0^T\int_{\mathbb{T}^n}R(u_t)\varphi(x, u) \\
			&\le \liminf_{\epsilon \to 0} \left(\int_0^T\int_{\mathbb{T}^n} \xi_{\epsilon}(u_t^\epsilon) \varphi(x, u^\epsilon) (w-u^\epsilon_t) + \int_0^T\int_{\mathbb{T}^n}R(u^\epsilon_t)\varphi(x, u^\epsilon)\right) \\
			&\le \liminf_{\epsilon \to 0}\int_0^T\int_{\mathbb{T}^n} R(w)\varphi(x, u^\epsilon) =\int_0^T\int_{\mathbb{T}^n} R(w)\varphi(x, u).
		\end{align*}
		This shows that $\eta \varphi(\cdot, u) \in \partial R(u_t) \varphi(\cdot, u)$ almost everywhere. By possibly redefining $\eta$ whenever $\varphi(\cdot, u) = 0$, we obtain $\eta \in \partial R(u_t)$ almost everywhere.
		
		Therefore, the sequence $u^\epsilon$ converges both to the unique viscosity solution and a weak solution. This shows that the unique viscosity solution is a weak solution.
	\end{proof}

	\begin{remark}
		The Gronwall Lemma can again be used to prove that the weak solutions are unique in the following sense: Let $(u, \eta)$ and $(\tilde{u}, \tilde{\eta})$ be weak solution pairs, then $u = \tilde{u}$ and $\eta \varphi(\cdot, u) = \tilde{\eta} \varphi(\cdot, \tilde{u})$ almost everywhere.
	\end{remark}
	
	\subsection{The pinning result}\label{ssec:pinning}
In line with our modeling considerations from the Introduction, we consider the differential inclusion for $u\colon \R^n\to \R$,
\begin{align} \label{eq:model}
		u_t  - \Laplace u + \varphi(x, u(x)) \partial |u_t| &\ni f(x, t), \\
u(\cdot, 0) &= u_0 \quad \text{on $\R^n$}.  \nonumber
\end{align}
where $\varphi(x,y) =  \eta_\delta \ast \chi_{O}$ with $O = \bigcup_{k\in\N} B_{\rho}(z_k(\omega)) \subset \R^{n+1}$, $\delta>0$, $\rho>0$. That is, $\varphi$ is described by a (smoothed out using a standard mollifier $\eta_\delta$) characteristic function of obstacles localized around centers $z_k(\omega) \in \R^{n+1}$, distributed according to a random process, with realization $\omega \in \hat{\Omega}$, a probability space. In the following we will omit referring to the realization $\omega \in \hat\Omega$ unless necessary.

We first note that solutions to this problem exist and admit a comparison principle.
\begin{proposition}[Existence and comparison] \label{prop:existence}
For $f\in C^0(\R^n\times (0,T))$ uniformly continuous and bounded with $T>0$ and $u_0 \in C^2(\R^n)$ with linear growth there exists a unique viscosity solution to \eqref{eq:model} for any $\omega\in \hat\Omega$. 
\end{proposition}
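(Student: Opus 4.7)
The plan is to recognize the proposition as a direct specialization of the abstract framework developed in Section~\ref{sec:vis_sols}. Cast \eqref{eq:model} into the form \eqref{eq:pde} by setting $F(x,t,r,p,X) = -\tr(X) - f(x,t)$, $G(x,t,r,p) = \varphi(x,r)$, and $\mathcal{S}(a) = -\partial R(a)$, and then verify the hypotheses of the comparison theorem \ref{thm:comp_rn} and Perron's method \ref{thm:perron} for this concrete choice.

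For the structural assumptions: $F$ is independent of $r$, so \ref{it:F_inc} is trivial. Writing $F = F_1(x,t,r) + F_2(t,p,X)$ with $F_1 = -f$ and $F_2(t,p,X) = -\tr(X)$, the uniform continuity and boundedness of $f$ yield \ref{it:F_growth}(i)--(iii), while $F_2$ is linear in $X$ and independent of $p$, so the Jensen--Ishii inequality makes \ref{it:F_growth}(ii) trivial. For $G$, the mollification $\varphi = \eta_\delta * \chi_O$ produces a function in $C^\infty_b(\R^n \times \R)$, which yields \ref{it:G_Lipschitz} and \ref{it:G_growth} with $G_2 \equiv 0$. Finally, $-\partial R$ is the classical maximal monotone sign graph, so \ref{it:S_mon} and \ref{it:S_bdd} hold with $\mathcal{S}_{\max} = 1$.

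Next I would construct a subsolution $\underline u$ and a supersolution $\bar u$ of \eqref{eq:model} matching $u_0$ at $t=0$, so that Theorem \ref{thm:perron} produces a viscosity solution sandwiched between them. The natural guess is $\bar u(x,t) = u_0(x) + C t$ and $\underline u(x,t) = u_0(x) - Ct$ for $C = \|f\|_\infty + \|\varphi\|_\infty + \|\Delta u_0\|_\infty$: at points of $C^2$-smoothness a direct computation confirms they are, respectively, classical super- and subsolutions. When $\Delta u_0$ is not globally bounded one can instead approximate $u_0$ by $C^2_b$ data $u_0^\epsilon \to u_0$ locally uniformly, build the trivial barriers above for each $\epsilon$, solve the approximate problem, and pass to the limit via the stability statement Proposition \ref{thm:stability}. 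In either approach one obtains a viscosity solution $u$ satisfying $\underline u \leq u \leq \bar u$ uniformly on $\R^n \times [0,T]$, so in particular $u(\cdot,0) = u_0$ and $u(x,t) - u(y,t) \leq L(1+|x|+|y|)$ for some $L>0$, which is the growth hypothesis \eqref{eq:comp_rn_assumption}. Uniqueness of solutions within this growth class follows by applying Theorem \ref{thm:comp_rn} in both directions.

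The hypothesis check and Perron step are essentially routine given the abstract machinery of Section~\ref{sec:vis_sols}; the genuinely delicate point is ensuring that the sub- and supersolutions both match $u_0$ at $t=0$ \emph{and} retain at most linear spatial growth, because $u_0 \in C^2(\R^n)$ with linear growth does not by itself imply any bound on $\Delta u_0$. I expect this to be the main technical obstacle, to be resolved either by an explicit spatial correction (e.g.\ adding a $(1+|x|^2)^{1/2}$-type term with appropriately small coefficient) or, more cleanly, by the approximation-plus-stability route described above.
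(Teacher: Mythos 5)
Your overall route---recasting \eqref{eq:model} in the form \eqref{eq:pde} with $F=-\tr(X)-f$, $G=\varphi$, $\mathcal{S}=-\partial R$, checking \ref{it:F_inc}, \ref{it:F_growth}, \ref{it:G_Lipschitz}, \ref{it:G_growth}, \ref{it:S_mon}, \ref{it:S_bdd}, and invoking Theorem \ref{thm:comp_rn} together with Perron's method (Theorem \ref{thm:perron})---is exactly the argument the paper has in mind (its proof is a one-line appeal to Theorem \ref{thm:perron}), and your verification of the structural hypotheses is correct. The genuine gap is the one you flag yourself and do not close: the barriers. The functions $u_0\pm Ct$ are super-/subsolutions only if $\Delta u_0$ is bounded, which does not follow from $u_0\in C^2(\Rn)$ with linear growth, and neither proposed repair works as stated. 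Adding $\epsilon(1+|x|^2)^{1/2}$ is useless here: its second derivatives are bounded, so it cannot absorb an unbounded $\Delta u_0$, and it also spoils the matching of the initial datum at $t=0$, which Perron needs in order to pin $W(\cdot,0)=u_0$. The approximation route is also not covered by the paper's tools: Proposition \ref{thm:stability} requires \emph{local uniform convergence of the approximate solutions}, and on $\Rn$ you have no equicontinuity estimate to produce it (Lemma \ref{lem:hold_cont} is proved only on the torus and needs H\"older data and $\alpha$-growth), while Theorem \ref{thm:cont_dep} measures the data in the sup-norm on the parabolic boundary, which is not small if $u_0^\epsilon\to u_0$ only locally uniformly.

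The step can be repaired while keeping your scheme. Take $h(x,t):=\bigl(e^{t\Delta}u_0\bigr)(x)$; since $u_0$ is continuous with linear growth, $h$ is a classical solution of the heat equation for $t>0$, is continuous up to $t=0$ with $h(\cdot,0)=u_0$, and retains linear growth in $x$ uniformly for $t\in[0,T]$. Set $\overline u:=h+Ct$ and $\underline u:=h-Ct$ with $C:=\|f\|_\infty+\|\varphi\|_\infty$. For any jet element of the smooth function $\overline u$ one has $b+F\ge C-f\ge\|\varphi\|_\infty\ge\nu\,\varphi$ for \emph{every} $\nu$ with $|\nu|\le 1$, since $\varphi\ge 0$; hence $\overline u$ is a classical, and thus viscosity, supersolution no matter which element of $\mathcal{S}(b)$ is selected, and symmetrically $\underline u$ is a subsolution. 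These barriers agree with $u_0$ at $t=0$, so Theorem \ref{thm:perron} produces a solution attaining the initial datum, the sandwich $\underline u\le u\le\overline u$ yields the growth hypothesis \eqref{eq:comp_rn_assumption}, and Theorem \ref{thm:comp_rn} then gives uniqueness (within this linear-growth class, which is also how the uniqueness assertion of the proposition should be read). With this substitution your proof is complete; the rest of your hypothesis check and the comparison/uniqueness argument stand as written.
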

\begin{proof}
This follows immediately by an application of Theorem \ref{thm:perron}.
\end{proof}

The main pinning result can now be proved by reproducing in this context the argument of \cite[Theorem 2.4]{Dondl_11b}. We have the following.

\begin{theorem}[Pinning] \label{thm:main} Assume that the $z_k$ are given by a $n+1$-dimensional Poisson point process with intensity $\lambda>0$. Then, there exists a deterministic $F^*>0$ such that for $||f||_{L^{\infty}} \le F^*$ and $u_0$ bounded there exist random \emph{stationary} sub- and supersolutions $\overline u \colon \R^n \to \R$, $\underline u\colon \R^n \to \R$ such that any viscosity solution $u$ of \eqref{eq:model} satisfies $\underline u(x) \le u(x,t) \le \overline u(x)$. 
\end{theorem}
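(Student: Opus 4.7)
The plan is to reduce the dynamic pinning statement to two scalar, time-independent inequalities and then invoke the comparison machinery of Section \ref{sec:vis_sols}. First I would observe that, for a time-independent $\overline u\colon \R^n\to\R$, every element $(b,q,Y)$ of the parabolic semijet $\mathcal P^{2,-}\overline u(x_0,t_0)$ satisfies $b=0$: approaching $(x_0,t_0)$ along $y=x_0$, $s\to t_0^\pm$ in the jet inequality forces $b\le 0$ and $b\ge 0$, respectively. Since $\mathcal{S}(0)=[-1,1]$, Definition \ref{def:vis_sol} then says that such an $\overline u$ is a supersolution of \eqref{eq:model} iff, for every $(0,q,Y)\in\mathcal P^{2,-}\overline u(x_0)$, there exists $\nu\in[-1,1]$ with $-\operatorname{tr}(Y)-f(x_0,t_0)\ge \nu\,\varphi(x_0,\overline u(x_0))$; because $\varphi\ge 0$, the optimal choice $\nu=-1$ reduces this to the scalar inequality
\[
-\Delta \overline u + \varphi(x,\overline u) \ \ge\ \|f\|_\infty
\]
in the viscosity sense on $\R^n$. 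A parallel argument with $\mu=+1$ shows that a stationary $\underline u$ is a subsolution as soon as $-\Delta \underline u - \varphi(x,\underline u)\le -\|f\|_\infty$ in the viscosity sense.

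Next I would construct $\overline u$ by importing \cite[Theorem 2.4]{Dondl_11b} essentially verbatim: that argument produces, almost surely, a continuous function $\overline u$ of at most linear growth such that $-\Delta \overline u + \varphi(x,\overline u)\ge F^*$ in the viscosity sense, for a deterministic $F^*>0$ depending only on $n$, the intensity $\lambda$, the radius $\rho$, and the mollification scale $\delta$. The construction is geometric-probabilistic: one uses the Poisson distribution of the ball centres to cover $\R^n$ with cells on which $\overline u$ alternates between small concave caps anchored on obstacle graphs---where $\varphi>0$ supplies the right-hand side pointwise---and mildly concave bridges of controlled second derivative over the obstacle-free regions, and then shows that the cell covering percolates almost surely. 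The subsolution is obtained for free by symmetry: the reflected point process $\{(x_k,-y_k)\}$ has the same law as $\{(x_k,y_k)\}$, so the same construction applied to it produces a function $w$ with $-\Delta w + \varphi(x,-w)\ge F^*$, whence $\underline u:=-w$ satisfies $-\Delta \underline u - \varphi(x,\underline u)\le -F^*$. This two-sided use of the same obstacle cloud is precisely the novel feature enabled by dry friction, as emphasised in the Introduction. Finally, by the vertical translation invariance of the Poisson process in the $(n{+}1)$-st coordinate and a Kolmogorov 0--1 argument, I would arrange $\overline u\ge \|u_0\|_\infty$ and $\underline u\le -\|u_0\|_\infty$ almost surely, so that $\underline u\le u_0\le \overline u$.

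The pinning bound $\underline u\le u(\cdot,t)\le \overline u$ then follows from two applications of Theorem \ref{thm:comp_rn}, to the pairs $(u,\overline u)$ and $(\underline u,u)$: hypotheses \ref{it:F_inc}, \ref{it:F_growth}, \ref{it:G_Lipschitz}, \ref{it:G_growth}, \ref{it:S_mon}, \ref{it:S_bdd} are immediate for the splitting $F_1=-f$, $F_2=-\operatorname{tr}(X)$, $G_1=\varphi$, $G_2\equiv 0$, while the linear-growth condition \eqref{eq:comp_rn_assumption} is met because $u(\cdot,t)$ is bounded on every $[0,T]$ (by the analogue of \eqref{eq:approx_growth} used in Theorem \ref{thm:approx_exist}) and $\overline u,\underline u$ grow at most linearly in $x$. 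The genuine hurdle in this plan is the probabilistic step: one must track the percolation argument of \cite[Theorem 2.4]{Dondl_11b} carefully enough to confirm that the slightly different obstacles used here (mollified characteristic functions of unit Poisson balls rather than smooth bumps of random amplitude) still deliver a positive deterministic $F^*$, and that the resulting supersolution can be arranged to lie above any prescribed bounded initial datum. Everything else amounts to translating between conventions and invoking the machinery of Section \ref{sec:vis_sols}.
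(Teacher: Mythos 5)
Your proposal is correct and follows essentially the same route as the paper: the paper's (very terse) proof likewise exploits that for a stationary function $u_t=0$, so both $+1$ and $-1$ lie in $\partial|u_t|$, reducing the super- and subsolution properties to the stationary inequalities $-\Delta \overline u + \varphi \ge \|f\|_\infty$ and $-\Delta \underline u - \varphi \le -\|f\|_\infty$, which are then supplied by the construction of \cite[Theorem 2.4]{Dondl_11b} translated by $\sup u_0$ (and its reflected analogue below $\inf u_0$), with the sandwich $\underline u \le u \le \overline u$ coming from the comparison principle of Section~\ref{ssec:comp_existence}. Your additional details (vanishing time-component of parabolic jets for stationary functions, the reflection-in-law argument for the subsolution, and the verification of the hypotheses of Theorem~\ref{thm:comp_rn}) are consistent with what the paper leaves implicit.
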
	
\begin{proof}
By translation of the construction in \cite{Dondl_11b} by $\sup u_0$ we obtain a stationary supersolution noting that $1 \in \partial|s|$ at $s=0$. A subsolution is found analogously, by using the fact that $-1 \in \partial|s|$ at $s=0$ as well.
\end{proof}

\begin{remark}
We note that, by \cite[Theorem~2]{Grimmett:2012ig}, we see that the sub- and supersolutions can be chosen such that they are of bounded expectation at every point $x$. In fact, they even have a finite exponential moment.
\end{remark}

Before closing this discussion, let us note that the
original pinning result in \cite{Dondl_11b} uses a slightly different model for the obstacles, including for example a random strength as well as the possibility of stacking obstacles for nearby points in the Poisson point process in an additive fashion. Due to issues with the requirement of global Lipschitz continuity of $\varphi$, we have chosen an a-priori truncated obstacle field. We argue that the model used here is in fact reasonable for many physical situations, e.g., when considering precipitate hardening as for example discussed in~\cite{1903.07505}.

\section*{Acknowledgement}
US acknowledges support by the Vienna Science and Technology Fund (WWTF)
under project MA14-009 and by the Austrian Science Fund (FWF) under project F\,65. LC acknowledges support from the Fonds National de la Recherche, Luxembourg (AFR Grant 13502370).
	
	\bibliographystyle{plain}

\end{document}